\newtheorem{thm}{Theorem}[section]
\newtheorem{cor}[thm]{Corollary}
\newtheorem{lem}[thm]{Lemma}
\newtheorem{rem}[thm]{Remark}
\numberwithin{equation}{section}
\newcommand{\al}{\alpha}
\def\vz{\varepsilon}
\def\oz{\omega}
\def\lz{\lambda}
\def\Lz{\Lambda}
\def\dz{\delta}
\def\Gz{\Gamma}
\def\({\Bigl(}
\def \){ \Bigr)}
 \def\RR{{\mathbb R}}
\def\va{\varepsilon}
\begin{document}
\def\RR{\mathbb{R}}
\def\Exp{\text{Exp}}
\def\FF{\mathcal{F}_\al}

\title[] {On the power of standard information for tractability for
$L_2$-approximation in the average case  setting}

\author{Wanting Lu} \address{ School of Mathematical Sciences, Capital Normal
University, Beijing 100048,
 China.}
 \email{luwanting1234@163.com}

\author{Heping Wang} \address{ School of Mathematical Sciences, Capital Normal
University,
Beijing 100048,
 China.}
\email{wanghp@cnu.edu.cn}

\keywords{Tractability, Standard information, general linear
information, Randomized setting} \subjclass[2010]{41A63; 65C05;
65D15;  65Y20}

\begin{abstract} We study multivariate approximation  in the average case setting  with
the error measured in the weighted $L_2$ norm. We consider
algorithms that use standard information $\Lz^{\rm std}$
consisting of function values or general linear information
$\Lz^{\rm all}$ consisting of arbitrary continuous linear
functionals.   We investigate the equivalences of various notions
of algebraic
 and exponential tractability  for
$\Lz^{\rm std}$  and $\Lz^{\rm all}$  for the absolute  error
criterion, and show that  the power of $\Lz^{\rm std}$  is the
same as that of $\Lz^{\rm all}$  for all notions of algebraic
 and exponential tractability  without any condition. Specifically, we solve
Open Problems 116-118 and almost solve Open Problem 115 as posed
by E.Novak and H.Wo\'zniakowski in the book: Tractability of
Multivariate Problems, Volume III: Standard Information for
Operators, EMS Tracts in Mathematics, Z\"urich, 2012.

\end{abstract}

\maketitle
\input amssym.def

\section{Introduction}

 In this paper, we study
multivariate approximation ${\rm APP}=\{{\rm APP}_d\}_{d\in\Bbb
N}$ in the average case setting, where
$${{\rm APP}}_d: F_d\to G_d\ \ {\rm with}\ \  {\rm APP}_d\,
f=f $$ is the continuous embedding operator,  $F_d$ is  a
separable Banach  function space on $D_d$ equipped with a
zero-mean Gaussian measure $\mu_d$, $G_d$ is a weighted $L_2$
space on $D_d$, $D_d$ is a Lebesgue measurable subset of $\Bbb
R^d$, and the dimension $d$ is large or even huge. We consider
algorithms that use finitely many information evaluations. Here
information evaluation means continuous linear functional on $F_d$
(general linear information) or function value  at some point
(standard information). We use $\Lz^{\rm all}$ and $\Lz^{\rm std}$
to denote the class of all continuous linear functionals and the
class of all function values, respectively.

 For a given error threshold
$\vz\in(0,1)$, the information complexity $n(\vz, d)$ is defined
to be the minimal number of information evaluations for which the
average case  error of some algorithm is at most $\vz$.
Tractability is aimed at studying how the information complexity
$n(\vz,d)$ depends on $\vz$ and $d$. There are two kinds of
tractability based on polynomial convergence and exponential
convergence. The algebraic tractability (ALG-tractability)
describes how the information complexity $n(\vz, d)$ behaves as a
function of $d$ and $\vz^{-1}$,  while the exponential
tractability (EXP-tractability)
 does as one of $d$ and $(1+\ln\vz^{-1})$. The existing notions of
tractability mainly include strong polynomial tractability (SPT),
polynomial tractability (PT), quasi-polynomial tractability (QPT),
weak tractability (WT), $(s, t)$-weak tractability ($(s,t)$-WT),
and uniform weak tractability (UWT).
 In
recent years   the study of algebraic  and
 exponential tractability has attracted much interest, and a great number of
interesting results
 have been obtained
  (see
\cite{NW1, NW2, NW3, W, GW, X1, S1, SiW,  DLPW,  DKPW, KW, PP, X2,
IKPW, CW, LX2, PPXD} and the references therein).

This paper is devoted to investigating  the equivalences of
various notions of algebraic and exponential tractability  for
 $\Lz^{\rm std}$ and $\Lz^{\rm all}$
 in the average case   setting (see \cite[Chapter
24]{NW3}). The class $\Lz^{\rm std}$ is much smaller and much more
practical,  and is much more difficult to analyze than the class
$\Lz^{\rm all}$. Hence, it is very important to study the power of
$\Lz^{\rm std}$ compared to $\Lz^{\rm all}$. There are many paper
devoted to this field. For example, for the randomized setting,
see \cite{NW3, WW07, KWW, K19, CDL, KWW,  CM, LW}; for the average
case setting, see \cite{NW3, HWW, LZ, X5}; for the worst case
setting, see \cite{NW3, WW01, HNV, KWW1, NW16, NW17, KU, KU2, KUV,
NSU, KS}.

In \cite{HWW, NW3, X5} the authors obtained the equivalences of
various notions of algebraic and exponential tractability
 in the average case setting for
 $\Lz^{\rm std}$ and $\Lz^{\rm all}$ for the normalized error
criterion  without any condition. Meanwhile,  for the absolute
error criterion under some conditions on the initial error, the
equivalences of ALG-SPT, ALG-PT, ALG-QPT, ALG-WT were also
obtained in \cite{NW3}.
 Xu obtained in \cite{X5} the equivalences of
ALG-PT, ALG-QPT, ALG-WT for $\Lz^{\rm all}$ and $\Lz^{\rm std}$
under much weaker conditions. This gives a partial solution to
Open problems 116-118 in \cite{NW3}.  Xu also obtained in
\cite{X5} the equivalences of ALG-$(s,t)$-WT, ALG-UWT, and various
notions of EXP-tractability under some conditions on the initial
error.

In this paper  we obtain the equivalences of various notions of
algebraic and exponential tractability   for $\Lz^{\rm all}$ and
$\Lz^{\rm std}$ in the average case   setting  for the absolute
error criterion without any condition, which means the above
conditions  are unnecessary. This completely solves Open problems
116-118 in \cite{NW3}.
 We also give an almost complete solution to  Open
Problem 115 in \cite{NW3}.

This paper is organized as follows.   In Subsections 2.1  we
introduce the approximation problem in the average  case setting.
The various notions of algebraic and exponential tractability are
given in Subsection 2.2.
   Our main results Theorems
  2.1-2.5
 are stated  in Subsection 2.3. In  Section 3, we give the proofs of Theorems 2.1 and
2.2. After that, in Section 4  we show  the equivalences of  the
notions of algebraic tractability for the absolute error criterion
without any condition. The equivalence results for the notions of
exponential tractability for  the absolute error criterion are
proved in Section 5.

\section{Preliminaries and Main Results}

\subsection{Average case setting}\

 For $d\in \Bbb N$, let $F_d$ be a separable Banach space
of $d$-variate real-valued functions on $D_d$ equipped with  a
zero-mean Gaussian measure $\mu_d$, $G_d=L_{2}(D_d,\rho_{d}(x)dx)$
be a weighted $L_2$ space, where $D_d$ is a Borel measurable
subset of $\mathbb{R}^{d}$ with positive Lebesgue measure,
$\rho_d$ is a probability density function on $D_d$.  We consider
 the multivariate approximation
problem ${\rm APP}=\{{\rm APP}_d\}_{d\in \Bbb N}$  in the
average case setting which is defined via the continuous linear operator
\begin{equation}\label{2.1} {{\rm APP}}_d: F_d\to G_d\ \ {\rm with}\ \  {\rm APP}_d\,
f=f.\end{equation} We suppose that   function value at some point
 $x\in D_d$  is well defined continuous linear functional on $F_d$.
That is, we suppose that  $\Lz^{\rm std}\subset \Lz^{\rm
all}=(F_d)^*$, where $(F_d)^*$ is the dual space of $F_d$.  It is
well known that, in the average case setting with the average
being with respect to a zero-mean Gaussian measure, adaptive
choice of the above information evaluations do not essentially
help, see \cite{TWW}. Hence, we can restrict our attention to
nonadaptive algorithms, i.e., algorithms $A_{n,d}f$  of the form
\begin{equation}\label{2.2}A_{n,d}f=\phi
_{n,d}(L_1(f),L_2(f),\dots,L_n(f)),\end{equation} where $L_i\in
\Lz, \ i=1,\dots,n$, $\Lz\in\{\Lz^{\rm all},\, \Lz^{\rm std}\}$,
 and $\phi _{n,d}:\;\Bbb R^n\to
G_d$ is an arbitrary measurable mapping from $\Bbb R^n$ to $G_d$.
The average case error for the algorithm $A_{n,d}$ of the form
\eqref{2.2} is defined as
 $$e^{\rm avg}(A_{n,d}):=\Big(\int_{F_d}\|{\rm APP}_{d} f-A_{n,d}f\|^2_{G_d}\ \mu_d(df)\Big)^{1/2}.$$
  The $n$th  minimal average case error for $\Lz\in \{\Lz^{\rm all},\, \Lz^{\rm std}\}$ is defined by
$$e^{\rm avg}(n,d;\Lz):=\inf_{A_{n,d} \ {\rm with}\ L_i\in \Lz}e^{\rm avg}(A_{n,d}),$$
where the infimum is taken over all algorithms of the form
\eqref{2.2}.

For $n=0$, we use $A_{0,d}=0$. We obtain  the so-called initial
error $e^{\rm avg}(0,d)$ defined by
$$e^{\rm avg}(0,d ):=e^{\rm avg}(0,d;\Lz^{\rm
all} )=e^{\rm avg}(0,d;\Lz^{\rm std} )=\Big(\int_{F_d}\|{\rm
APP}_{d} f\|^2_{G_d}\ \mu_d(df)\Big)^{1/2}.$$We set
$$\Gz_d:=(e^{\rm avg}(0,d;\Lz^{\rm all} ))^2=(e^{\rm avg}(0,d;\Lz^{\rm
std} ))^2.$$

It follows from \cite[Chapter 6]{TWW} and \cite{NW3} that $e^{\rm
avg}(n,d;\Lz^{\rm all})$  are described through the eigenvalues
and the eigenvectors of the covariance
operator $C_{\nu_d}: \,G_d\to G_d$ of the induced measure $\nu_d=\mu_{d}S_d^{-1}$ of $\mu_d$. Here, $\mu_d$ is a zero-mean Gaussian measure of $F_d$, so that $\nu_d$ is a zero-mean Gaussian measure on the Borel sets of $G_d$. The operator $C_{\nu_d}$ is self-adjoint, non-negative definite, and the trace of $C_{\nu_d}$ is finite.\\
Let  $\big\{(\lz_{k,d},\eta_{k,d})\big\}_{k=1}^\infty$  denote the
eigenpairs of $C_{\nu_d}$ satisfying $$\lz_{1,d}\ge\lz_{2,d}\ge
\dots \lz_{n,d}\dots \ge 0.$$That is,
$\{\eta_{k,d}\}_{k=1}^\infty$ is an orthonormal basis in $G_d$,
and
$$C_{\nu_d}\,\eta_{k,d} =\lz_{k,d}\,\eta_{k,d},\  k\in\mathbb{N}. $$  From \cite{TWW, NW3}
 we get   that the $n$th minimal average case
error is
$$e^{\rm avg}(n,d; \Lz^{\rm all})=\big(\sum_{k=n+1}^{\infty}\lz_{k,d}\big)^{1/2},$$
and  it is achieved by the optimal algorithm
$$A_{n,d}^*f=\sum_{k=1}^n \langle f, \eta_{k,d} \rangle_{G_d}\,
\eta_{k,d}.$$That is,
\begin{equation}\label{2.2-0}e^{\rm avg}(n,d; \Lz^{\rm all})=\big(\int_{F_d}\|f-A_{n,d}^*f\|^2_{G_d}\mu_d(df)\big)^{1/2}=\big(\sum_{k=n+1}^{\infty}\lz_{k,d}\big)^{1/2}.\end{equation}
The trace of $C_{\nu_d}$ is just the square of the initial error
$e^{\rm avg}(0, d)$ given by
$${\rm trace}(C_{\nu_d})= \Gz_d=(e^{\rm avg}(0,d))^2=\int_{G_d}\|g\|^2_{G_d}\nu_d(dg)=
\sum_{k=1}^{\infty}\lz_{k,d}<\infty.$$

The  information complexity  can be studied using either the
absolute error criterion (ABS) or the normalized error criterion
(NOR). In the average case setting for $\star\in\{{\rm
ABS,\,NOR}\}$ and  $\Lz\in \{\Lz^{\rm all}, \Lz^{\rm std}\}$, we
define the information complexity $n^{ \star}(\va ,d;\Lz)$  as
\begin{equation} n^{ \star}(\va
,d;\Lz):=n^{ \rm avg, \star}(\va ,d;\Lz):=\inf\{n \ \big|\  e^{\rm
avg}(n,d;\Lz)\le \vz\, {\rm CRI}_d\}, \end{equation} where
\begin{equation*}
{\rm CRI}_d:=\left\{\begin{split}
 &\ \ 1, \qquad\, \quad\quad\text{for $\star$=ABS,} \\
 &e^{\rm avg}(0,d), \quad \text{ for $\star$=NOR}
\end{split}\right. \ \ =\ \ \left\{\begin{split}
 &\ 1,  \qquad \ \ \ \text{ for $\star$=ABS,} \\
 &(\Gz_d)^{1/2},\  \ \text{ for $\star$=NOR.}
\end{split}\right.
\end{equation*}
Since $\Lz^{\rm std}\subset \Lz^{\rm all},$ we get
\begin{equation*}e^{\rm avg}(n,d;\Lz^{\rm all})\le e^{\rm avg}(n,d;\Lz^{\rm std}).\end{equation*}
It follows that for $\star\in\{{\rm ABS,\,NOR}\}$,
\begin{equation} \label{2.4-1}n^{ \star}(\va
,d;\Lz^{\rm all})\le n^{ \star}(\va ,d;\Lz^{\rm std}).
\end{equation}

\subsection{Notions of tractability}\

In this subsection we briefly recall the various tractability
notions in the average case setting.  First we introduce all
notions of algebraic tractability. Let ${\rm APP}= \{{\rm
APP}_d\}_{d\in\Bbb N}$,
 $\star\in\{{\rm ABS,\,NOR}\}$, and
$\Lz\in \{\Lz^{\rm all}, \Lz^{\rm std}\}$. In the average case
setting for the class $\Lambda$, and for error criterion $\star$,
we say that ${\rm APP}$ is

$\bullet$ Algebraic strongly polynomially tractable (ALG-SPT) if
there exist $ C>0$ and non-negative number $p$ such that
\begin{equation}\label{2.7}n^{ \star}(\va ,d;\Lz)\leq C\varepsilon^{-p},\
\text{for all}\ \varepsilon\in(0,1).\end{equation}The exponent
ALG-$p^{ \star}(\Lz)$ of ALG-SPT is defined as the infimum of $p$
for which \eqref{2.7} holds;

$\bullet$ Algebraic polynomially tractable (ALG-PT)  if there
exist $ C>0$ and non-negative numbers $p,q$ such that
$$n^{ \star}(\va
,d;\Lz)\leq Cd^{q}\varepsilon^{-p},\ \text{for all}\
d\in\mathbb{N},\ \varepsilon\in(0,1);$$

$\bullet$ Algebraic quasi-polynomially tractable (ALG-QPT) if
there exist $ C>0$ and non-negative number $t$ such that
\begin{equation}\label{2.8}n^{ \star}(\va
,d;\Lz)\leq C \exp(t(1+\ln{d})(1+\ln{\varepsilon^{-1}})),\
\text{for all}\ d\in\mathbb{N},\
\varepsilon\in(0,1).\end{equation}The exponent ALG-$t^{
\star}(\Lz)$ of ALG-QPT  is defined as the infimum of $t$ for
which \eqref{2.8} holds;

$\bullet$ Algebraic uniformly weakly tractable (ALG-UWT)  if
$$\lim_{\varepsilon^{-1}+d\rightarrow\infty}\frac{\ln n^{\star}(\va
,d;\Lz)}{\varepsilon^{-\alpha}+d^{\beta}}=0,\ \text{for all}\
\alpha, \beta>0;$$

$\bullet$ Algebraic weakly tractable (ALG-WT) if
$$\lim_{\varepsilon^{-1}+d\rightarrow\infty}\frac{\ln n^{\star}(\va
,d;\Lz)}{\varepsilon^{-1}+d}=0;$$

$\bullet$ Algebraic $(s,t)$-weakly tractable (ALG-$(s,t)$-WT) for
fixed $s, t>0$ if
 $$\lim_{\varepsilon^{-1}+d\rightarrow\infty}\frac{\ln n^{\star}(\va
,d;\Lz)}{\varepsilon^{-s}+d^{t}}=0.$$

Clearly, ALG-$(1,1)$-WT is the same as ALG-WT. If ${\rm APP}$ is
not ALG-WT, then ${\rm APP}$  is called  intractable.

If
 the $n$th  minimal error    decays faster
than any polynomial and is exponentially convergent, then we
should study  tractability with $\vz^{-1}$ being replaced by
$(1+\ln \frac 1{\vz})$, which is called exponential tractability.
Recently, there have been many papers studying exponential
tractability (see \cite{DLPW, DKPW, X3, PPXD, KW, IKPW, CW, LX2}).

In the definitions of ALG-SPT, ALG-PT, ALG-QPT, ALG-UWT, ALG-WT,
and ALG-$(s,t)$-WT, if we replace $\frac1{\vz}$ by $(1+\ln \frac
1{\vz})$, we get the definitions of \emph{exponential strong
polynomial tractability} (EXP-SPT), \emph{exponential polynomial
tractability} (EXP-PT), \emph{exponential quasi-polynomial
tractability} (EXP-QPT), \emph{exponential uniform weak
tractability}
 (EXP-UWT), \emph{exponential weak tractability}
 (EXP-WT), and \emph{exponential $(s,t)$-weak tractability}
 (EXP-$(s,t)$-WT), respectively.  We now give the above notions of exponential tractability in
 detail.

Let ${\rm APP}= \{{\rm APP}_d\}_{d\in\Bbb N}$, $\star\in\{{\rm ABS,\,NOR}\}$, and $\Lz\in \{\Lz^{\rm
all}, \Lz^{\rm std}\}$. In the average case setting for the class
$\Lambda$, and for error criterion $\star$, we say that ${\rm
APP}$ is

$\bullet$ Exponential strongly polynomially tractable (EXP-SPT) if
there exist $ C>0$ and non-negative number $p$ such that
\begin{equation}\label{2.9}n^{ \star}(\va ,d;\Lz)\leq C(\ln\varepsilon^{-1}+1)^{p},\
\text{for all}\ \varepsilon\in(0,1).\end{equation}The exponent
EXP-$p^{\star}(\Lz)$ of EXP-SPT is defined as the infimum of $p$
for which \eqref{2.9} holds;

$\bullet$ Exponential  polynomially tractable (EXP-PT)  if there
exist $C>0$ and non-negative numbers $p,q$ such that
$$n^{\star}(\va
,d;\Lz)\leq Cd^{q}(\ln\varepsilon^{-1}+1)^{p},\ \text{for all}\
d\in\mathbb{N},\ \varepsilon\in(0,1);$$

$\bullet$ Exponential  quasi-polynomially tractable (EXP-QPT) if
there exist $C>0$ and non-negative number $t$ such that
\begin{equation}\label{2.10}n^{\star}(\va
,d;\Lz)\leq C \exp(t(1+\ln{d})(1+\ln(\ln\varepsilon^{-1}+1))),\
\text{for all}\ d\in\mathbb{N},\
\varepsilon\in(0,1).\end{equation}The exponent EXP-$t^{
\star}(\Lz)$ of EXP-QPT  is defined as the infimum of $t$ for
which \eqref{2.10} holds;

$\bullet$ Exponential  uniformly weakly tractable (EXP-UWT)  if
$$\lim_{\varepsilon^{-1}+d\rightarrow\infty}\frac{\ln n^{\star}(\va
,d;\Lz)}{(1+\ln\varepsilon^{-1})^{\alpha}+d^{\beta}}=0,\ \text{for
all}\ \alpha, \beta>0;$$

$\bullet$ Exponential  weakly tractable (EXP-WT) if
$$\lim_{\varepsilon^{-1}+d\rightarrow\infty}\frac{\ln n^{\star}(\va
,d;\Lz)}{1+\ln\varepsilon^{-1}+d}=0;$$

$\bullet$ Exponential  $(s,t)$-weakly tractable (EXP-$(s,t)$-WT)
for fixed $s,t>0$ if
 $$\lim_{\varepsilon^{-1}+d\rightarrow\infty}\frac{\ln n^{\star}(\va
,d;\Lz)}{(1+\ln\varepsilon^{-1})^{s}+d^{t}}=0.$$

\subsection{Main results}\

We shall give  main results of this paper in this subsection. We
remark that  for multivariate approximation problem results and
proofs in the average case  setting are  in full analogy with ones
in the randomized setting (see \cite{LW}). For the convenience of
the reader, we provide details of  all proofs.

The authors in \cite{HWW, NW3, X5} used the mean value theorem and
iterated  Monte Carlo methods to obtain  the relation between
$e^{\rm avg}(n,d;\Lz^{\rm std})$ and $e^{\rm avg}(n,d;\Lz^{\rm
all})$. We  use the mean value theorem and the method used in
\cite{KUV, LW} to get an  inequality between $e^{\rm
avg}(n,d;\Lz^{\rm std})$ and $e^{\rm avg}(n,d;\Lz^{\rm all})$. See
the following theorem.

\begin{thm}
Let $\delta\in(0,1)$,  $m,n\in\mathbb{N}$ be such that
$$m=\left\lfloor\frac{n}{48(\sqrt{2}\ln(2n)-\ln{\delta})}\right\rfloor.$$
Then we have
\begin{equation}\label{2.12}e^{\rm avg}(n,d;\Lz^{\rm std})\le\Big(1+\frac{4m}{n}\Big)^{\frac12}\frac{1}{\sqrt{1-\delta}}\,e^{\rm avg}(m,d;\Lz^{\rm all}),
\end{equation}where $\lfloor x\rfloor$ denotes the largest
integer not exceeding $x$.
\end{thm}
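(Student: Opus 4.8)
The plan is to bound the standard-information error $e^{\rm avg}(n,d;\Lz^{\rm std})$ by exhibiting a concrete algorithm using $n$ function values that is nearly as good as the optimal algorithm $A_{m,d}^*$ using $m$ general linear functionals. The natural candidate is a (weighted) least-squares algorithm on randomly sampled points: fix the eigenbasis $\{\eta_{k,d}\}_{k\ge 1}$ of $C_{\nu_d}$ and work in the $m$-dimensional subspace $V_m=\operatorname{span}\{\eta_{1,d},\dots,\eta_{m,d}\}$. Draw $n$ i.i.d.\ points $x_1,\dots,x_n$ from $D_d$ according to a density proportional to a suitable Christoffel-type function $\frac1m\sum_{k=1}^m \eta_{k,d}(x)^2\rho_d(x)$ (renormalized), and let $A_{n,d}f$ be the weighted least-squares projection of $f$ onto $V_m$ from the data $(x_j,f(x_j))$. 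This is exactly the scheme used in \cite{KUV, LW}; the key probabilistic input is a matrix Chernoff bound guaranteeing that, with probability at least $1-\delta$ over the sample, the empirical Gram matrix $G=\big(\frac1n\sum_j \eta_{k,d}(x_j)\eta_{l,d}(x_j)w(x_j)\big)_{k,l\le m}$ satisfies $\|G-I\|\le \frac12$, provided $n\gtrsim m\ln(2m/\delta)$ — and the stated relation $m=\lfloor n/(48(\sqrt2\ln(2n)-\ln\delta))\rfloor$ is precisely the inversion of this sampling condition, with the constant $48$ absorbing the Chernoff constants.

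The key steps, in order, are: (1) For a \emph{fixed} good point set (one on which $\|G-I\|\le\frac12$), estimate the error of the weighted least-squares algorithm pointwise in $f$, splitting $f=P_{V_m}f+(f-P_{V_m}f)$; the first part is recovered, and the tail part $f-P_{V_m}f$ contributes a term controlled by $\|f-P_{V_m}f\|_{G_d}^2$ plus an empirical-norm term $\frac1n\sum_j (f-P_{V_m}f)(x_j)^2 w(x_j)$ times an operator-norm factor $\le 2$ (from $\|G^{-1}\|\le 2$). (2) Integrate this bound over $f$ with respect to $\mu_d$: using that $\nu_d=\mu_d S_d^{-1}$ is zero-mean Gaussian on $G_d$ with covariance $C_{\nu_d}$, the expectation of $\|f-P_{V_m}f\|_{G_d}^2$ is $\sum_{k>m}\lz_{k,d}=(e^{\rm avg}(m,d;\Lz^{\rm all}))^2$, and — crucially, by the choice of sampling density — the expectation over $f$ of the empirical-norm term equals $\frac1m\sum_{k>m}\lz_{k,d}\cdot\big(\text{something}\big)$; here one uses that $\mathbb{E}_f[(f-P_{V_m}f)(x)^2]=\sum_{k>m}\lz_{k,d}\eta_{k,d}(x)^2$, and integrating against the Christoffel density makes the weight cancel cleanly, yielding a bound $\frac{4m}{n}(e^{\rm avg}(m,d;\Lz^{\rm all}))^2$ after accounting for the $n$-sample average and the operator-norm factor. (3) Since the \emph{expected} (over the random points) error of this algorithm is at most $(1+\frac{4m}{n})(e^{\rm avg}(m,d;\Lz^{\rm all}))^2$ \emph{conditioned on the good event}, and the good event has probability $\ge 1-\delta$, a standard averaging/selection argument (restrict the point-sampling measure to the good event, losing a factor $\frac1{1-\delta}$) produces one deterministic point set and algorithm achieving $e^{\rm avg}(A_{n,d})^2\le (1+\frac{4m}{n})\frac1{1-\delta}(e^{\rm avg}(m,d;\Lz^{\rm all}))^2$, which is \eqref{2.12} after taking square roots.

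The main obstacle I expect is step (2): making the two expectations — over the Gaussian measure $\mu_d$ and over the random sample points — interact correctly so that the Christoffel weight cancels and the tail sum $\sum_{k>m}\lz_{k,d}$ appears with the clean coefficient $\frac{4m}{n}$ rather than something involving $\sup_x\sum_{k>m}\lz_{k,d}\eta_{k,d}(x)^2$, which need not be finite or small. The resolution is Fubini plus the exact identity $\int_{D_d}\big(\sum_{k>m}\lz_{k,d}\eta_{k,d}(x)^2\big)\,\frac{\frac1m\sum_{l\le m}\eta_{l,d}(x)^2\rho_d(x)}{\int \frac1m\sum_{l\le m}\eta_{l,d}^2\rho_d}\,dx$ — wait, that is not quite it either; the correct device (as in \cite{LW}) is that the weight $w(x)=\big(\frac1m\sum_{l\le m}\eta_{l,d}(x)^2\big)^{-1}$ multiplies the empirical term, so $\mathbb{E}_x[w(x)(f-P_{V_m}f)(x)^2]=\int (f-P_{V_m}f)(x)^2\rho_d(x)\,dx=\|f-P_{V_m}f\|_{G_d}^2$ exactly, and then $\mathbb{E}_f$ of that is $\sum_{k>m}\lz_{k,d}$; the factor $m/n$ comes from the fact that one actually needs to control a slightly different quadratic form and from the matrix Chernoff normalization. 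Getting the bookkeeping of these constants to land exactly on $1+\frac{4m}{n}$ and $\frac1{1-\delta}$ is the technical heart of the argument; everything else is routine linear algebra and the cited concentration inequality.
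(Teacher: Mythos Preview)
Your overall strategy matches the paper's exactly: Christoffel-weighted sampling from the density $\omega_{m,d}=h_{m,d}\rho_d$ with $h_{m,d}=\frac1m\sum_{k\le m}|\eta_{k,d}|^2$, weighted least squares $S_{\rm X}^m$ onto $V_m$, the matrix Chernoff bound from \cite{KUV} for the good event $\|\widetilde H_m-I_m\|\le\frac12$, and a mean-value argument to extract a deterministic point set at the cost of $1/(1-\delta)$. The place you flag as the main obstacle is indeed where the proof lives, but the mechanism you sketch does not produce the factor $4m/n$.

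Concretely, bounding $\|S_{\rm X}^m g\|_{G_d}^2$ by $\|G^{-1}\|$ times the weighted empirical norm $\frac1n\sum_j w(x_j)\,g(x_j)^2=\frac1n\sum_j|\tilde g(x_j)|^2$ (with $g=f-A_{m,d}^*f$, $\tilde g=g/\sqrt{h_{m,d}}$) gives, after taking the sample expectation, only $2\|g\|_{G_d}^2$, hence a total $3\|g\|_{G_d}^2$ --- enough for the downstream tractability corollaries but not for the stated constant. The paper instead uses
\[
\|S_{\rm X}^m g\|_{G_d}^2=\|\tilde c\|_2^2\le \|(\widetilde L_m^*\widetilde L_m)^{-1}\|^2\,\|\widetilde L_m^*\tilde{\rm g}\|_2^2\le\frac{4}{n^2}\,\|\widetilde L_m^*\tilde{\rm g}\|_2^2,
\]
and then computes the sample expectation of $\|\widetilde L_m^*\tilde{\rm g}\|_2^2=\sum_{k\le m}\bigl|\sum_{j=1}^n\overline{\tilde\eta_{k,d}(x_j)}\,\tilde g(x_j)\bigr|^2$. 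For $i\ne j$ the cross term has expectation $|\langle\tilde g,\tilde\eta_{k,d}\rangle_{H_d}|^2=|\langle g,\eta_{k,d}\rangle_{G_d}|^2=0$ because $g\perp V_m$ in $G_d$; the $n$ diagonal terms sum to $n\sum_{k\le m}\int_{D_d}|g|^2|\eta_{k,d}|^2 h_{m,d}^{-1}\rho_d=nm\|g\|_{G_d}^2$. Hence $\mathbb E\,\|\widetilde L_m^*\tilde{\rm g}\|_2^2\le nm\|g\|_{G_d}^2$ and $\mathbb E\,\|S_{\rm X}^m g\|_{G_d}^2\le\frac{4m}{n}\|g\|_{G_d}^2$. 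The orthogonality of $g$ to $\eta_{1,d},\dots,\eta_{m,d}$ --- which your sketch never invokes --- is the missing ingredient that kills the $n(n-1)$ cross terms and converts the apparent $n^2$ scaling into $nm$. Once you have the pointwise-in-$f$ bound $(1+\frac{4m}{n})\|f-A_{m,d}^*f\|_{G_d}^2$, the Fubini and mean-value steps go exactly as you describe; no separate computation of $\mathbb E_f[(f-P_{V_m}f)(x)^2]$ is needed.
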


Based on Theorem 2.1, we obtain two relations between the
information complexities $n^{\star}(\varepsilon,d;\Lambda^{\rm
std})$ and $n^{\star}(\varepsilon,d;\Lambda^{\rm all})$ for
$\star\in\{{\rm ABS,\,NOR}\}$.

\begin{thm} For  $\star\in\{{\rm ABS,\,NOR}\}$ and $\oz>0$, we
have
\begin{equation}\label{2.17}n^{\star}(\varepsilon,d;\Lambda^{\rm std})\le C_\oz \Big(n^{\star}(\frac\varepsilon4,d;\Lambda^{\rm
all})+1\Big)^{1+\oz},
\end{equation}where $C_\oz$ is a positive constant depending only on $\oz$.
Similarly,  for sufficiently small $\oz,\dz>0$ and $\star\in\{{\rm
ABS,\,NOR}\}$, we have
\begin{equation}\label{2.18}
n^{\star}(\varepsilon,d;\Lambda^{\rm std}) \le C_{\oz,\dz}
 \big(n^{\star}(\frac{\varepsilon}{A_\delta},d;\Lambda^{\rm
all})+1\big)^{1+\oz},
\end{equation}where
$A_{\delta}:=\Big(1+\frac{1}{12\ln{\frac{1}{\delta}}}\Big)^{\frac12}\frac{1}{\sqrt{1-\delta}}$,
$C_{\oz,\dz}$ is a positive constant depending only on $\oz$ and
$\dz$.
\end{thm}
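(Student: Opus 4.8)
The plan is to convert the error inequality \eqref{2.12} of Theorem 2.1 into a complexity inequality by choosing the free parameters appropriately. Fix $\star\in\{\mathrm{ABS},\mathrm{NOR}\}$ and $\oz>0$. Given $\vz\in(0,1)$ and $d\in\NN$, set $k:=n^{\star}(\frac{\vz}{4},d;\Lz^{\rm all})$, so that $e^{\rm avg}(k,d;\Lz^{\rm all})\le\frac{\vz}{4}\,\mathrm{CRI}_d$. The idea is to apply \eqref{2.12} with $m=k$; this forces a choice of $n$, namely the smallest $n\in\NN$ for which $\lfloor n/(48(\sqrt 2\ln(2n)-\ln\dz))\rfloor\ge k$, and also requires fixing $\dz\in(0,1)$. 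First I would observe that for such $n$ one has $n=O\big(k(\ln(k+2)+\ln\tfrac1\dz)\big)$: indeed $m\ge k$ holds as soon as $n\ge 48k(\sqrt2\ln(2n)-\ln\dz)+48$, and a routine bootstrap (plug $n\asymp k\ln k$ back into the logarithm) shows this is satisfied for some $n\le C\,k(\ln(k+2)+\ln\tfrac1\dz)$ with an absolute constant $C$. Then \eqref{2.12} gives
\begin{equation*}
e^{\rm avg}(n,d;\Lz^{\rm std})\le\Big(1+\tfrac{4m}{n}\Big)^{1/2}\tfrac{1}{\sqrt{1-\dz}}\,e^{\rm avg}(m,d;\Lz^{\rm all})\le\sqrt5\cdot\tfrac{1}{\sqrt{1-\dz}}\cdot\tfrac{\vz}{4}\,\mathrm{CRI}_d,
\end{equation*}
using $4m/n\le 4$. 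Choosing $\dz$ a fixed absolute constant (say $\dz=\tfrac{1}{2}$, which makes $\sqrt5/\sqrt{1-\dz}=\sqrt{10}<4$) yields $e^{\rm avg}(n,d;\Lz^{\rm std})\le\vz\,\mathrm{CRI}_d$, hence $n^{\star}(\vz,d;\Lz^{\rm std})\le n\le C\,k(\ln(k+2)+1)$ with $C$ absolute.

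To finish \eqref{2.17} I would absorb the logarithmic factor into the exponent: for every $\oz>0$ there is $C_\oz$ with $x(\ln(x+2)+1)\le C_\oz(x+1)^{1+\oz}$ for all $x\ge 0$ (since $\ln(x+2)=o(x^{\oz})$ as $x\to\infty$, and the ratio is bounded on any compact set). Applying this with $x=k=n^{\star}(\frac\vz4,d;\Lz^{\rm all})$ gives
\begin{equation*}
n^{\star}(\vz,d;\Lz^{\rm std})\le C_\oz\Big(n^{\star}(\tfrac\vz4,d;\Lz^{\rm all})+1\Big)^{1+\oz},
\end{equation*}
which is \eqref{2.17}. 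For the second inequality \eqref{2.18} the same argument is run, but instead of fixing $\dz$ one keeps it small and tracks the constant $A_\dz=(1+\tfrac{1}{12\ln(1/\dz)})^{1/2}(1-\dz)^{-1/2}$: here the point is that with $m\ge k$ the quantity $4m/n$ can actually be made as small as $\tfrac{1}{12\ln(1/\dz)}$ by taking $n$ slightly larger (this is exactly the ratio appearing inside $A_\dz$), so \eqref{2.12} yields $e^{\rm avg}(n,d;\Lz^{\rm std})\le A_\dz\,e^{\rm avg}(m,d;\Lz^{\rm all})\le A_\dz\cdot\tfrac{\vz}{A_\dz}\,\mathrm{CRI}_d=\vz\,\mathrm{CRI}_d$ once $k=n^{\star}(\frac{\vz}{A_\dz},d;\Lz^{\rm all})$; the bound $n\le C_\dz\,k(\ln(k+2)+1)$ still holds with $C_\dz$ depending on $\dz$, and absorbing the log into the exponent $1+\oz$ produces $C_{\oz,\dz}$.

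The main obstacle is the bootstrap estimate showing that the smallest admissible $n$ satisfies $n\le C\,k(\ln(k+2)+1)$ (resp. $n\le C_\dz\,k(\ln(k+2)+1)$), i.e. controlling the implicit relation $m=\lfloor n/(48(\sqrt2\ln(2n)-\ln\dz))\rfloor\ge k$; one must be careful that $\sqrt2\ln(2n)-\ln\dz>0$ and handle small $k$ (where $n$ is bounded by an absolute constant) separately from large $k$. The rest — bounding $4m/n$, choosing $\dz$, and the elementary inequality $x(\ln(x+2)+1)\le C_\oz(x+1)^{1+\oz}$ — is routine. Note also that the derivation is uniform in $d$ and identical for $\star=\mathrm{ABS}$ and $\star=\mathrm{NOR}$, since $\mathrm{CRI}_d$ enters multiplicatively on both sides.
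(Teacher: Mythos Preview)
Your proposal is correct and follows essentially the same route as the paper: apply Theorem~2.1, invert the relation $m=\lfloor n/(48(\sqrt2\ln(2n)-\ln\dz))\rfloor$ to bound $n$ by $C\,(m+1)\ln(m+1)$, and then absorb the logarithm into the exponent $1+\oz$. The only cosmetic differences are that the paper fixes $\dz=2^{-\sqrt2}$ (so that $48(\sqrt2\ln(2n)-\ln\dz)=48\sqrt2\ln(4n)$ and $1+4m/n\le 2$, yielding the clean constant $4$ in \eqref{4.01}) and performs the inversion directly from the floor inequality rather than via a bootstrap, but the structure of the argument is the same.
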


In the average case  setting,  for the normalized error criterion,
\cite[Theorems 24.10, 24.12, and 24.6]{NW3} gives the equivalences
of  ALG-PT (ALG-SPT), ALG-QPT, ALG-WT for $\Lz^{\rm all}$ and
$\Lz^{\rm std}$, and  shows that the exponents of ALG-SPT and
ALG-QPT for $\Lz^{\rm all}$ and $\Lz^{\rm std}$  are same;
\cite[Theorems 3.4 and 3.5]{X5} gives the equivalences of
ALG-$(s,t)$-WT, ALG-UWT for  $\Lz^{\rm all}$ and $\Lz^{\rm std}$.

For the absolute  error criterion, \cite[Theorems 24.11, 24.13,
and 24.6]{NW3}  gives the equivalences of  ALG-PT (ALG-SPT),
ALG-QPT, ALG-WT for $\Lz^{\rm all}$ and $\Lz^{\rm std}$ under some
conditions on the initial error. Novak and Wo\'zniakowski posed
Open problems 116-118 in \cite{NW3} which ask whether the above
conditions are necessary. Xu obtained in \cite[Theorems
3.1-3.5]{X5} the equivalences of ALG-PT, ALG-QPT, ALG-WT,
ALG-$(s,t)$-WT, ALG-UWT for $\Lz^{\rm all}$ and $\Lz^{\rm std}$
under much weaker conditions. This gives a partial solution to
Open problems 116-118 in \cite{NW3}.

In this paper  we obtain the equivalences of ALG-SPT, ALG-PT,
ALG-QPT, ALG-WT, ALG-$(s,t)$-WT, ALG-UWT   for $\Lz^{\rm all}$ and
$\Lz^{\rm std}$ in the average case   setting  for the absolute
error criterion without any condition, which means the above
conditions  are unnecessary. This solves Open problems 116-118 in
\cite{NW3}. See the following theorem.

\begin{thm} Consider the problem ${\rm APP}=\{{\rm APP}_d\}_{d\in \Bbb N}$
in the average case   setting for the absolute error criterion.
Then\vskip 2mm

$\bullet$ $\rm ALG$-$\rm SPT$, $\rm ALG$-$\rm PT$, $\rm ALG$-$\rm
QPT$, $\rm ALG$-$\rm WT$,  $\rm ALG$-$(s,t)$-$\rm WT$,   $\rm
ALG$-$\rm UWT$ for $\Lz^{\rm all}$ is equivalent to $\rm ALG$-$\rm
SPT$, $\rm ALG$-$\rm PT$, $\rm ALG$-$\rm QPT$, $\rm ALG$-$\rm WT$,
$\rm ALG$-$(s,t)$-$\rm WT$,   $\rm ALG$-$\rm UWT$ for $\Lz^{\rm
std}$;\vskip 2mm

$\bullet$ The exponents $\rm ALG$-$p^{\rm  ABS}(\Lz)$ of $\rm
ALG$-$\rm SPT$ for $\Lz^{\rm all}$ and $\Lz^{\rm std}$ are same,
and the exponents $\rm ALG$-$t^{\rm  ABS}(\Lz)$ of $\rm ALG$-$\rm
QPT$ for $\Lz^{\rm all}$ and $\Lz^{\rm std}$ are also same.
\end{thm}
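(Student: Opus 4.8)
The plan is to deduce everything from Theorem 2.2, which already packages the essential quantitative comparison between the standard and general information complexities. Since $\Lz^{\rm std}\subset\Lz^{\rm all}$, inequality \eqref{2.4-1} gives $n^{\rm ABS}(\vz,d;\Lz^{\rm all})\le n^{\rm ABS}(\vz,d;\Lz^{\rm std})$ for free, so in each equivalence only the implication ``tractability for $\Lz^{\rm all}$ $\Longrightarrow$ tractability for $\Lz^{\rm std}$'' requires work, and for that I would use \eqref{2.17}: for every $\oz>0$ there is $C_\oz>0$ with $n^{\rm ABS}(\vz,d;\Lz^{\rm std})\le C_\oz\bigl(n^{\rm ABS}(\tfrac\vz4,d;\Lz^{\rm all})+1\bigr)^{1+\oz}$. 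The strategy is then to substitute the assumed upper bound on $n^{\rm ABS}(\tfrac\vz4,d;\Lz^{\rm all})$ into the right-hand side of \eqref{2.17}, choose $\oz$ small enough, and read off the corresponding tractability bound for $\Lz^{\rm std}$.

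I would organize the argument notion by notion. For ALG-SPT: if $n^{\rm ABS}(\vz,d;\Lz^{\rm all})\le C\vz^{-p}$, then \eqref{2.17} gives $n^{\rm ABS}(\vz,d;\Lz^{\rm std})\le C_\oz(C\,4^p\vz^{-p}+1)^{1+\oz}\le C'\vz^{-p(1+\oz)}$, so ALG-SPT holds for $\Lz^{\rm std}$ with exponent at most $p(1+\oz)$; letting $\oz\to0$ and recalling the trivial reverse inequality shows the exponents ALG-$p^{\rm ABS}(\Lz^{\rm all})$ and ALG-$p^{\rm ABS}(\Lz^{\rm std})$ coincide. For ALG-PT the same substitution produces a bound $C' d^{q(1+\oz)}\vz^{-p(1+\oz)}$, hence ALG-PT. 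For ALG-QPT, if $n^{\rm ABS}(\vz,d;\Lz^{\rm all})\le C\exp\!\bigl(t(1+\ln d)(1+\ln\vz^{-1})\bigr)$, then since $\ln 4$ is absorbed into a constant, \eqref{2.17} yields $n^{\rm ABS}(\vz,d;\Lz^{\rm std})\le C'\exp\!\bigl(t(1+\oz)(1+\ln d)(1+\ln\vz^{-1})\bigr)$, giving ALG-QPT with exponent $\le t(1+\oz)$, and again $\oz\to0$ together with the reverse inequality equates the two ALG-$t^{\rm ABS}$ exponents. For ALG-WT, ALG-$(s,t)$-WT and ALG-UWT I would take logarithms of \eqref{2.17}: $\ln n^{\rm ABS}(\vz,d;\Lz^{\rm std})\le \ln C_\oz+(1+\oz)\ln\bigl(n^{\rm ABS}(\tfrac\vz4,d;\Lz^{\rm all})+1\bigr)$, so dividing by the relevant denominator $\vz^{-s}+d^{t}$ (respectively $\vz^{-1}+d$, or $\vz^{-\al}+d^\bz$ for all $\al,\bz>0$) and using that the $\Lz^{\rm all}$ quantity tends to $0$ by assumption — the constant factor $\tfrac14$ inside $\vz$ only changes things by a bounded factor in the denominator — shows the $\Lz^{\rm std}$ limit is also $0$. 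The reverse directions are immediate from \eqref{2.4-1}.

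I do not expect a genuine obstacle here: the content is entirely in Theorem 2.2, and what remains is the bookkeeping of plugging bounds into \eqref{2.17} and tracking how the exponents transform. The only mildly delicate points are (i) checking that the $1{+}\oz$ power does not spoil the \emph{exponents} of ALG-SPT and ALG-QPT, which is handled by letting $\oz\downarrow0$ and invoking the trivial inequality ALG-$p^{\rm ABS}(\Lz^{\rm all})\le$ ALG-$p^{\rm ABS}(\Lz^{\rm std})$ (and similarly for $t$) that follows from \eqref{2.4-1}; and (ii) making sure the replacement of $\vz$ by $\vz/4$ is harmless in the weak-tractability limits — it is, because $(\vz/4)^{-s}+d^t \le 4^s(\vz^{-s}+d^t)$, so the denominators are comparable up to a constant. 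Alternatively one could run the ALG-WT, ALG-$(s,t)$-WT and ALG-UWT arguments through \eqref{2.18} with a fixed small $\dz$, which avoids the factor $4$ entirely; either route works.
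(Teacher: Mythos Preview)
Your overall strategy matches the paper's: Theorem 2.3 is assembled from Theorems 4.1--4.5, each of which plugs the relevant tractability bound into \eqref{2.17} (or \eqref{2.18}) and reads off the conclusion, exactly as you describe. The treatment of ALG-SPT, ALG-PT, ALG-$(s,t)$-WT, ALG-WT and ALG-UWT is correct and essentially identical to the paper.

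There is, however, a genuine gap in your handling of the ALG-QPT \emph{exponent}. You assert that ``$\ln 4$ is absorbed into a constant'' and conclude an exponent $t(1+\oz)$. This is not right: substituting $\vz/4$ into the QPT bound gives
\[
\exp\bigl(t(1+\ln d)(1+\ln 4+\ln\vz^{-1})\bigr)\le \exp\bigl(t(1+\ln 4)(1+\ln d)(1+\ln\vz^{-1})\bigr),
\]
so after applying \eqref{2.17} the exponent you obtain is $t(1+\oz)(1+\ln 4)$, which tends to $t(1+\ln 4)>t$ as $\oz\to0$. The extra $\ln 4$ multiplies $(1+\ln d)$, which is unbounded, and therefore cannot be absorbed into the constant $C$. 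With \eqref{2.17} alone you get the equivalence of ALG-QPT but \emph{not} the equality of exponents.

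The paper fixes this by using \eqref{2.18} for the QPT case (Theorem 4.2): there the factor $4$ is replaced by $A_\delta=\bigl(1+\tfrac{1}{12\ln(1/\delta)}\bigr)^{1/2}(1-\delta)^{-1/2}$, and one obtains exponent $t(1+\oz)(1+\ln A_\delta)$. Since $A_\delta\to 1$ as $\delta\to 0$, letting $(\oz,\delta)\to(0,0)$ now recovers exponent $t$, and combined with the trivial inequality from \eqref{2.4-1} this gives ${\rm ALG}\text{-}t^{\rm ABS}(\Lz^{\rm std})={\rm ALG}\text{-}t^{\rm ABS}(\Lz^{\rm all})$. You actually mention \eqref{2.18} at the end as an alternative for the weak-tractability cases; the point is that for QPT it is not an alternative but a necessity.
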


For  exponential convergence in the average case setting, we first
give an almost complete solution to Open Problem 115 in
\cite{NW3}. \vskip 2mm

In the average case  setting for the normalized  error criterion,
Xu obtained in \cite[Theorems 4.1-4.5]{X5} the equivalences of
$\rm EXP$-$\rm SPT$, $\rm EXP$-$\rm PT$, $\rm EXP$-$\rm QPT$, $\rm
EXP$-$\rm WT$, $\rm EXP$-$(s,t)$-$\rm WT$, $\rm EXP$-$\rm UWT$ for
$\Lz^{\rm all}$ and $\Lz^{\rm std}$, however, he did not show that
the exponents of EXP-SPT and EXP-QPT for $\Lz^{\rm all}$ and
$\Lz^{\rm std}$ are same.

For the absolute  error criterion, Xu also obtained in
\cite[Theorems 4.1-4.5]{X5} the equivalences of $\rm EXP$-$\rm
SPT$, $\rm EXP$-$\rm PT$, $\rm EXP$-$\rm QPT$, $\rm EXP$-$\rm WT$,
$\rm EXP$-$(s,t)$-$\rm WT$, $\rm EXP$-$\rm UWT$ for $\Lz^{\rm
all}$ and $\Lz^{\rm std}$ under  weak conditions on the initial
error.

In this paper  we obtain the equivalences of EXP-SPT, EXP-PT,
EXP-QPT, EXP-WT, EXP-$(s,t)$-WT, EXP-UWT   for $\Lz^{\rm all}$ and
$\Lz^{\rm std}$ in the average case   setting  for the absolute
error criterion without any condition, which means the above
conditions   are unnecessary.  We also show that the exponents of
EXP-SPT and EXP-QPT for $\Lz^{\rm all}$ and $\Lz^{\rm std}$ are
same  for  the normalized or absolute error criterion. See the
following theorem.

\begin{thm}
Consider the problem ${\rm APP}=\{{\rm APP}_d\}_{d\in \Bbb N}$ in
the average case  setting. Then \vskip 2mm

$\bullet$   for the absolute   error criterion, $\rm EXP$-$\rm
SPT$, $\rm EXP$-$\rm PT$, $\rm EXP$-$\rm QPT$, $\rm EXP$-$\rm WT$,
$\rm EXP$-$(s,t)$-$\rm WT$, $\rm EXP$-$\rm UWT$  for $\Lz^{\rm
all}$ is equivalent to $\rm EXP$-$\rm SPT$, $\rm EXP$-$\rm PT$,
$\rm EXP$-$\rm QPT$, $\rm EXP$-$\rm WT$, $\rm EXP$-$(s,t)$-$\rm
WT$, $\rm EXP$-$\rm UWT$
 for $\Lz^{\rm
std}$;\vskip 2mm

$\bullet$ for $\star\in\{{\rm ABS,NOR}\}$, the exponents $\rm
EXP$-$p^{ \star}(\Lz)$ of $\rm EXP$-$\rm SPT$ for $\Lz^{\rm all}$
and $\Lz^{\rm std}$ are same, and the exponents $\rm
EXP$-$t^{\star}(\Lz)$ of $\rm EXP$-$\rm QPT$ for $\Lz^{\rm all}$
and $\Lz^{\rm std}$ are also same.

\end{thm}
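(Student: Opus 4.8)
The plan is to deduce Theorem 2.5 entirely from the two-sided estimates supplied by \eqref{2.4-1} and Theorem 2.2. Since $\Lz^{\rm std}\subset\Lz^{\rm all}$, inequality \eqref{2.4-1} already gives one half of every assertion: every listed notion for $\Lz^{\rm std}$ forces the same notion for $\Lz^{\rm all}$, and ${\rm EXP}\text{-}p^{\star}(\Lz^{\rm all})\le{\rm EXP}\text{-}p^{\star}(\Lz^{\rm std})$, ${\rm EXP}\text{-}t^{\star}(\Lz^{\rm all})\le{\rm EXP}\text{-}t^{\star}(\Lz^{\rm std})$. So everything reduces to the reverse direction, for which I would feed the tractability bound for $\Lz^{\rm all}$ into \eqref{2.17} or \eqref{2.18}. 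The one elementary fact I would isolate and reuse is that for a constant $c\ge1$ and $\vz\in(0,1)$,
\[
1+\ln\bigl((c\vz)^{-1}\bigr)\le(1+\ln c)(1+\ln\vz^{-1}),\qquad 1+\ln\bigl(1+\ln((c\vz)^{-1})\bigr)\le\bigl(1+\ln(1+\ln c)\bigr)\bigl(1+\ln(1+\ln\vz^{-1})\bigr),
\]
i.e. shrinking the accuracy from $\vz$ to $\vz/c$ multiplies $1+\ln\vz^{-1}$ (respectively its logarithm) only by a fixed factor, a factor that tends to $1$ as $c\to1$. This is exactly what neutralizes the accuracy loss $\vz\mapsto\vz/4$ (respectively $\vz\mapsto\vz/A_{\dz}$) present on the right of \eqref{2.17}--\eqref{2.18}.

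For EXP-PT and EXP-SPT I would start from $n^{\star}(\vz,d;\Lz^{\rm all})\le Cd^{q}(1+\ln\vz^{-1})^{p}$, substitute it (with $\vz$ replaced by $\vz/4$) into \eqref{2.17}, apply the first displayed inequality with $c=4$, and absorb the additive $+1$ and all constants (legitimate because $d^{q}(1+\ln\vz^{-1})^{p}\ge1$) to get $n^{\star}(\vz,d;\Lz^{\rm std})\le C'd^{q(1+\oz)}(1+\ln\vz^{-1})^{p(1+\oz)}$. This yields EXP-PT (and, with $q=0$, EXP-SPT) for $\Lz^{\rm std}$, and since $\oz>0$ is arbitrary it gives ${\rm EXP}\text{-}p^{\star}(\Lz^{\rm std})\le{\rm EXP}\text{-}p^{\star}(\Lz^{\rm all})$, hence equality of the SPT exponents by \eqref{2.4-1}. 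For EXP-QPT the fixed factor $4$ in \eqref{2.17} is too lossy --- it would leave the spurious constant $1+\ln(1+\ln4)>1$ in the exponent --- so I would instead use \eqref{2.18}: from $n^{\star}(\vz,d;\Lz^{\rm all})\le C\exp\bigl(t(1+\ln d)(1+\ln(1+\ln\vz^{-1}))\bigr)$ and the second displayed inequality with $c=A_{\dz}$ one obtains EXP-QPT for $\Lz^{\rm std}$ with exponent at most $(1+\oz)\bigl(1+\ln(1+\ln A_{\dz})\bigr)t$. As $A_{\dz}\to1$ when $\dz\to0^{+}$, letting first $\oz\to0$ and then $\dz\to0$ gives ${\rm EXP}\text{-}t^{\star}(\Lz^{\rm std})\le{\rm EXP}\text{-}t^{\star}(\Lz^{\rm all})$, so the QPT exponents coincide.

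For the three weak-type notions (EXP-WT, EXP-$(s,t)$-WT, EXP-UWT), which are limit statements, I would take logarithms in \eqref{2.17} and divide by the relevant denominator. Using $(1+\ln(4\vz^{-1}))^{s}+d^{t}\le(1+\ln4)^{s}\bigl((1+\ln\vz^{-1})^{s}+d^{t}\bigr)$ (and the same with $s=t=1$, and with arbitrary $\al,\be>0$), together with the fact that $\vz^{-1}+d\to\infty$ forces $(\vz/4)^{-1}+d\to\infty$, the hypothesis that the appropriate quotient tends to $0$ for $\Lz^{\rm all}$ at accuracy $\vz/4$ transfers --- up to the bounded factor $(1+\ln4)^{s}$, the factor $1+\oz$, the additive term $\ln C_{\oz}$ (negligible after division), and the harmless replacement of $\ln(n+1)$ by $\ln2+\ln n$ --- to the statement that the quotient tends to $0$ for $\Lz^{\rm std}$ at accuracy $\vz$. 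This gives all three weak-type equivalences.

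Finally, since Theorem 2.2 is stated for both error criteria, running the SPT and QPT computations verbatim with $\star={\rm NOR}$ yields the equality of the EXP-SPT and EXP-QPT exponents in the normalized case as well, completing the proof. Given Theorem 2.2, the only genuinely delicate point is the EXP-QPT exponent identity: one must recognise that \eqref{2.17} is insufficient and that the sharper estimate \eqref{2.18} with $\dz\to0^{+}$, for which $A_{\dz}\to1$, is precisely what removes the extra double-logarithmic constant; everything else is bookkeeping with the two displayed inequalities above.
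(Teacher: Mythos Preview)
Your proposal is correct and follows essentially the same route as the paper: the paper also derives each EXP-notion for $\Lz^{\rm std}$ by feeding the corresponding bound for $\Lz^{\rm all}$ into \eqref{2.17} (for EXP-PT, EXP-SPT, and the weak-type notions) or into \eqref{2.18} (for EXP-QPT, precisely to make the extra double-log factor vanish as $\dz\to0$), using exactly the elementary logarithm inequalities you isolated and then letting $\oz\to0$ (and $\dz\to0$) to match the exponents. Your observation that \eqref{2.17} alone is too lossy for the EXP-QPT exponent and that \eqref{2.18} with $A_\dz\to1$ is needed is exactly the key point in the paper's Theorem~5.4.
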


Combining the obtained results in \cite{NW3, X5}  with Theorems
2.3 and 2.4 we obtain the following corollary.

\begin{cor}Consider the approximation problem ${\rm APP}= \{{\rm APP}_d\}_{d\in\Bbb N}$ for the absolute
or normalized error criterion in the average  case setting. Then
\vskip 2mm

$\bullet$ $\rm ALG$-$\rm SPT$, $\rm ALG$-$\rm PT$, $\rm ALG$-$\rm
QPT$,   $\rm ALG$-$\rm WT$,  $\rm ALG$-$(s,t)$-$\rm WT$, $\rm
ALG$-$\rm UWT$  for $\Lz^{\rm all}$ is equivalent to $\rm
ALG$-$\rm SPT$, $\rm ALG$-$\rm PT$, $\rm ALG$-$\rm QPT$, $\rm
ALG$-$\rm WT$, $\rm ALG$-$(s,t)$-$\rm WT$, $\rm ALG$-$\rm UWT$ for
$\Lz^{\rm std}$; \vskip 2mm

$\bullet$ $\rm EXP$-$\rm SPT$, $\rm EXP$-$\rm PT$, $\rm EXP$-$\rm
QPT$, $\rm EXP$-$\rm WT$, $\rm EXP$-$(s,t)$-$\rm WT$, $\rm
EXP$-$\rm UWT$  for $\Lz^{\rm all}$ is equivalent to $\rm
EXP$-$\rm SPT$, $\rm EXP$-$\rm PT$, $\rm EXP$-$\rm QPT$,  $\rm
EXP$-$\rm WT$, $\rm EXP$-$(s,t)$-$\rm WT$, $\rm EXP$-$\rm UWT$ for
$\Lz^{\rm std}$; \vskip 2mm

$\bullet$  the exponents of ${\rm SPT}$ and ${\rm QPT}$ are the
same  for  $\Lz^{\rm all}$  and  $\Lz^{\rm std}$, i.e., for $\star
\in \{{\rm ABS, NOR}\}$,
\begin{align*}{\rm ALG}\!-\!p^{\star}(\Lz^{\rm all}) &= {\rm ALG}\!-\!p^{\star}(\Lz^{\rm
std}), \qquad  {\rm ALG}\!-\!t^{\star}(\Lz^{\rm all}) = {\rm
ALG}\!-\!t^{\star}(\Lz^{\rm std}), \\
{\rm EXP}\!-\!p^{\star}(\Lz^{\rm all}) &= {\rm
EXP}\!-\!p^{\star}(\Lz^{\rm std}),\qquad  {\rm
EXP}\!-\!t^{\star}(\Lz^{\rm all}) = {\rm
EXP}\!-\!t^{\star}(\Lz^{\rm std}).
\end{align*}

\end{cor}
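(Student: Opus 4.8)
The plan is to split by error criterion and assemble the two halves. For the absolute error criterion everything required is already contained in Theorems 2.3 and 2.4 of this paper: the first bullet of Theorem 2.3 is exactly the equivalence, between $\Lz^{\rm all}$ and $\Lz^{\rm std}$, of $\rm ALG$-$\rm SPT$, $\rm ALG$-$\rm PT$, $\rm ALG$-$\rm QPT$, $\rm ALG$-$\rm WT$, $\rm ALG$-$(s,t)$-$\rm WT$ and $\rm ALG$-$\rm UWT$; the first bullet of Theorem 2.4 is the same statement for the six $\rm EXP$ notions; and the second bullets of Theorems 2.3 and 2.4 give the equalities of the $\rm ALG$-$\rm SPT$, $\rm ALG$-$\rm QPT$, $\rm EXP$-$\rm SPT$ and $\rm EXP$-$\rm QPT$ exponents for $\Lz^{\rm all}$ and $\Lz^{\rm std}$ in the absolute case.

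For the normalized error criterion I would quote the literature and add the one new ingredient. The equivalences of $\rm ALG$-$\rm SPT$, $\rm ALG$-$\rm PT$, $\rm ALG$-$\rm QPT$ and $\rm ALG$-$\rm WT$, together with equality of the $\rm ALG$-$\rm SPT$ and $\rm ALG$-$\rm QPT$ exponents, are \cite[Theorems 24.10, 24.12 and 24.6]{NW3}; the equivalences of $\rm ALG$-$(s,t)$-$\rm WT$ and $\rm ALG$-$\rm UWT$ are \cite[Theorems 3.4 and 3.5]{X5}; and the equivalences of all six $\rm EXP$ notions are \cite[Theorems 4.1--4.5]{X5}. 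The one piece absent from the literature in the normalized case, equality of the $\rm EXP$-$\rm SPT$ and $\rm EXP$-$\rm QPT$ exponents, is precisely the second bullet of Theorem 2.4, which is stated for both $\star=\rm ABS$ and $\star=\rm NOR$. Collecting these statements with those of the previous paragraph gives all three bullets of the corollary.

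Since each ingredient is either a theorem quoted from \cite{NW3} or \cite{X5} or one of Theorems 2.3--2.4, there is no substantive obstacle at the level of the corollary itself; the real work lies in the proofs of Theorems 2.1--2.4 in Sections 3--5, which build on the information-complexity inequalities of Theorem 2.2 (and ultimately the error inequality of Theorem 2.1). The only care needed here is bookkeeping: checking that the union of the quoted results with Theorems 2.3--2.4 covers each of the six tractability notions for each error criterion, and that the definition of exponent used in \cite{NW3} for the normalized criterion coincides with the one in Subsection 2.2. It is worth recording why the passage from $\Lz^{\rm all}$ to $\Lz^{\rm std}$ does not change the $\rm SPT$ and $\rm QPT$ exponents: by Theorem 2.2, $n^{\star}(\varepsilon,d;\Lz^{\rm std})\le C_{\oz}(n^{\star}(\varepsilon/4,d;\Lz^{\rm all})+1)^{1+\oz}$ for every $\oz>0$, so a polynomial (respectively quasi-polynomial) bound for $n^{\star}(\cdot;\Lz^{\rm all})$ is transformed into one for $n^{\star}(\cdot;\Lz^{\rm std})$ with the exponent multiplied by $1+\oz$; letting $\oz\to0^{+}$ and using the trivial bound $n^{\star}(\varepsilon,d;\Lz^{\rm all})\le n^{\star}(\varepsilon,d;\Lz^{\rm std})$ coming from $\Lz^{\rm std}\subset\Lz^{\rm all}$ forces the two exponents to coincide.
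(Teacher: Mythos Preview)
Your proposal is correct and follows the same approach as the paper: the corollary is obtained by combining Theorems 2.3 and 2.4 (for the absolute criterion and the new exponent equalities) with the previously known normalized-criterion results from \cite{NW3} and \cite{X5}. Your write-up is in fact more detailed than the paper's, which simply states ``Combining the obtained results in \cite{NW3, X5} with Theorems 2.3 and 2.4 we obtain the following corollary.''
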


\section{Proofs of Theorems 2.1 and 2.2}

Let us keep the notation of  Subsection 2.1. For any
$m\in\mathbb{N}$, we define the  functions $ h_{m,d}(x)$ and
$\oz_{m,d}$ on $D_d$ by
$$h_{m,d}(x):=\frac{1}{m}\sum_{j=1}^{m}|\eta_{j,d}(x)|^{2},\ \ \
\oz_{m,d}(x):=h_{m,d}(x)\,\rho_d(x),$$where
$\{\eta_{j,d}\}_{j=1}^\infty$ is an orthonormal basis in
$G_d=L_2(D_d,\rho_d(x)dx)$.  Then $\oz_{m,d}$ is a probability
density function on $D_d$, i.e., $\int_{D_d}\oz_{m,d}(x)\,dx=1$.
We define the corresponding probability measure $\mu_{m,d}$ by
$$\mu_{m,d}(A)=\int_{A}\oz_{m,d}(x)\,dx,$$where $A$ is a Borel subset of
$D_d$. We use the convention that $\frac00:=0$. Then $\{\tilde
\eta_{j,d}\}_{j=1}^\infty$ is an orthonormal system in
$L_2(D_d,\mu_{m,d})$, where $$ \tilde \eta_{j,d}:=\frac
{\eta_{j,d}} {\sqrt{h_{m,d}}} .$$

For ${\rm X}=(x^1, \dots, x^n)\in D_d^n$, we use the following
matrices
\begin{equation}
\widetilde{L}_m=\widetilde{L}_m({\rm X})=\left(
\begin{array}{cccc}
\widetilde{\eta}_{1,d}(x^{1})&\widetilde{\eta}_{2,d}(x^{1})&\cdots&\widetilde{\eta}_{m,d}(x^{1})\\
\widetilde{\eta}_{1,d}(x^{2})&\widetilde{\eta}_{2,d}(x^{2})&\cdots&\widetilde{\eta}_{m,d}(x^{2})\\
\vdots&\vdots&\ &\vdots\\
\widetilde{\eta}_{1,d}(x^{n})&\widetilde{\eta}_{2,d}(x^{n})&\cdots&\widetilde{\eta}_{m,d}(x^{n})\\
\end{array}
\right)\ \ \ \ \ {\rm and} \ \ \ \ \ \widetilde H_m=\frac
1n\widetilde{L}_m^*\widetilde{L}_m,\label{3.1}
\end{equation}where $A^*$ is the conjugate transpose of a matrix $A$.
Note that $$\widetilde{N}(m):=\sup\limits_{x\in
D_d}\sum\limits_{k=1}^{m}|\widetilde{\eta}_{k,d}(x)|^{2}=m.$$

According to  \cite[Propositions 5.1 and 3.1]{KUV} we have the
following results.

\begin{lem}
Let $n,m\in\mathbb{N}$. Let $x^{1},\ldots,x^{n}\in D_d$ be drawn
independently and identically distributed at random with respect
to the probability measure $\mu_{m,d}$. Then it holds that
$$\mathbb{P}(\|\widetilde{H}_{m}-I_{m}\|>\frac12)\leq(2n)^{\sqrt{2}}\exp\left(-\frac{n}{48m}\right),
$$where $\widetilde{L}_m, \ \widetilde H_m$ are given by \eqref{3.1},  $I_m$ is the identity matrix of order
$m$, and $\|L\|$ denotes the spectral norm (i.e. the largest
singular value) of a matrix $L$. Furthermore, if
$\|\widetilde{H}_{m}-I_{m}\|\le 1/2,$ then
\begin{equation}\|(\widetilde{L}^{*}_m\widetilde{L}_m)^{-1}\|\le \frac2n.\label{3.3}\end{equation}
\end{lem}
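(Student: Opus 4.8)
This is a concentration estimate for a sum of i.i.d.\ rank-one random matrices, and the plan is to recognize it as such and then apply a matrix Chernoff (Bernstein-type) inequality; this is precisely the content of \cite[Propositions 5.1 and 3.1]{KUV}. Put $\widetilde\xi_i:=\big(\widetilde\eta_{1,d}(x^i),\dots,\widetilde\eta_{m,d}(x^i)\big)^{*}\in\RR^m$, so that $\widetilde L_m^{*}\widetilde L_m=\sum_{i=1}^n\widetilde\xi_i\widetilde\xi_i^{*}$ and $\widetilde H_m=\frac1n\sum_{i=1}^n X_i$ with $X_i:=\widetilde\xi_i\widetilde\xi_i^{*}$. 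Since $x^1,\dots,x^n$ are i.i.d.\ with law $\mu_{m,d}$, the $X_i$ are i.i.d.\ positive semidefinite matrices, and I would first record the two structural facts feeding the concentration bound: (i) $\mathbb{E}X_1=I_m$, because the $(j,k)$-entry of $\mathbb{E}X_1$ equals $\int_{D_d}\widetilde\eta_{j,d}\,\widetilde\eta_{k,d}\,d\mu_{m,d}=\int_{D_d}\eta_{j,d}\,\eta_{k,d}\,\rho_d\,dx=\delta_{jk}$ for $j,k\le m$ (this is exactly orthonormality of $\{\widetilde\eta_{j,d}\}$ in $L_2(D_d,\mu_{m,d})$, equivalently orthonormality of $\{\eta_{j,d}\}$ in $G_d$ together with the definition of $\mu_{m,d}$); and (ii) $\|X_i\|=\|\widetilde\xi_i\|_2^2=\sum_{k=1}^m|\widetilde\eta_{k,d}(x^i)|^2\le\widetilde N(m)=m$ almost surely.

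With (i) and (ii) in hand, I would apply the matrix Chernoff bound of \cite[Proposition 5.1]{KUV} to $\sum_{i=1}^n X_i=n\widetilde H_m$ at deviation level $\tfrac12$, obtaining $\mathbb{P}\big(\|\widetilde H_m-I_m\|>\tfrac12\big)\le(2n)^{\sqrt2}\exp\!\big(-\tfrac{n}{48m}\big)$. The only part I view as genuinely non-routine is the prefactor $(2n)^{\sqrt2}$: a textbook matrix Chernoff inequality puts the ambient dimension $m$ in front instead, and trading it for a power of $n$ is the more delicate estimate carried out in \cite{KUV}, exploiting that $\widetilde H_m$ is assembled from only $n$ sample vectors. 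For a proof proposal I would simply cite that estimate with its explicit constants $48$ and $\sqrt2$ rather than reproduce the bookkeeping; and I would remark that for the use made of this lemma in the proof of Theorem~2.1 the cruder prefactor $O(m)$ would already suffice, since there $m$ is of order $n/\ln n$, and when $m>n$ the matrix $\widetilde H_m$ is singular, so $\|\widetilde H_m-I_m\|\ge1>\tfrac12$ holds deterministically while the claimed right-hand side exceeds $1$, making that case vacuous.

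The deterministic second assertion is then immediate: if $\|\widetilde H_m-I_m\|\le\tfrac12$, then $\widetilde H_m\succeq\tfrac12 I_m$, so the symmetric matrix $\widetilde L_m^{*}\widetilde L_m=n\widetilde H_m$ satisfies $\lambda_{\min}(\widetilde L_m^{*}\widetilde L_m)\ge\tfrac n2>0$; in particular it is invertible and $\|(\widetilde L_m^{*}\widetilde L_m)^{-1}\|=\lambda_{\min}(\widetilde L_m^{*}\widetilde L_m)^{-1}\le\tfrac2n$, which is \eqref{3.3}. Thus the whole difficulty sits in the middle step — and, within it, specifically in the quantitative form (the constants and the $n$-dependent prefactor) of the matrix concentration inequality; the surrounding algebra is routine.
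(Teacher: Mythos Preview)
Your proposal is correct and matches the paper's approach exactly: the paper does not prove this lemma at all but simply quotes it as \cite[Propositions 5.1 and 3.1]{KUV}, which is precisely what you invoke. Your sketch of the rank-one decomposition, the verification that $\mathbb{E}X_1=I_m$ and $\|X_i\|\le m$, and the elementary deduction of \eqref{3.3} from $\widetilde H_m\succeq\tfrac12 I_m$ are all accurate and in fact supply more detail than the paper itself provides.
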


\begin{rem}From Lemma 3.1 we immediately obtain  \begin{equation}\label{3.4}\mathbb{P}\big(\|\widetilde{H}_{m}-I_{m}\|\le 1/2\big)\ge 1-\delta\end{equation}
 if  \begin{equation} m=
\Big\lfloor\frac{n}{48(\sqrt{2}\ln(2n)-\ln{\delta})}\Big\rfloor\ge
1,\label{3.2}
\end{equation} holds, where $\lfloor x\rfloor$ denotes the largest
integer not exceeding $x$.
\end{rem}

Now let $m, n\in\Bbb N$ satisfy \eqref{3.2}, $x^1,\dots,x^n$ be
independent and identically distributed sample points  from $D_d$
that are distributed according to the probability measure
$\mu_{m,d}$, and  $\widetilde L_m, \widetilde H_m$ be given by
\eqref{3.1}. We consider the conditional distribution given the
event $\|\widetilde{H}_{m}-I_{m}\|\le 1/2$ and the conditional
expectation
$$\Bbb E(X\,\big|\ \|\widetilde{H}_{m}-I_{m}\|\le 1/2)=\frac{\int_{\|\widetilde{H}_{m}-I_{m}\|\le
1/2} X(x^1,\dots,x^n)\,d\mu_{m,d}(x^1)\dots d\mu_{m,d}(x^n)}
{\mathbb{P}\big(\|\widetilde{H}_{m}-I_{m}\|\le 1/2\big)}$$ of a
random variable $X$.

If $\|\widetilde{H}_{m}-I_{m}\|\le 1/2$ for some ${\rm
X}=(x^1,\dots,x^n)\in D_d^n$, then
$\widetilde{L}_{m}=\widetilde{L}_{m}({\rm X}) $ has the full rank.
The algorithm is a weighted least squares estimator
\begin{equation}{S}^{m}_{\rm X}\,f=\mathop{\arg\min}_{g\in
V_m}
\frac{|f(x^i)-g(x^i)|^{2}}{h_{m,d}(x^i)},\label{3.5}\end{equation}
which has a unique solution, where $V_m:={\rm
span}\{\eta_{1,d},\dots,\eta_{m,d}\}. $ It follows that
${S}^{m}_{\rm X}\,f=f$ whenever $f\in V_m$.
\\

\begin{table}[!ht]
\begin{tabular}{clllrrrr}
\toprule
\multicolumn{6}{l}{\textbf{Algorithm} \quad Weighted least squares regression.}
\\
\midrule
Input:&${\rm X}=(x^1,\dots,x^n)\in D_d^n$  &    &set of distinct sampling nodes,\\
&$\tilde {\rm
f}=\Big(\frac{f(x^1)}{\sqrt{h_{m,d}(x^1)}},\dots,\frac{f(x^n)}{\sqrt{h_{m,d}(x^n)}}\Big)^T$
&
&weighted samples of $f$ evaluted\\
& & &at the nodes from ${\rm X}$,\\
&$m\in \Bbb N$  &    &$m<n$ such that the matrix \\
& & & $\widetilde{L}_{m}:=\widetilde{L}_{m}({\rm X})$ from \eqref{3.1} has\\
& & &full (column) rank.\\
\multicolumn{6}{l}{Solve the over-determined linear system}\\

\multicolumn{6}{c}{$\widetilde{L}_{m}(\widetilde
c_{1},\cdots,\widetilde c_{m})^{T}=\tilde {\rm
f}$}\\

\multicolumn{6}{l}{via least square, i.e., compute}\\

\multicolumn{6}{c}{$(\widetilde c_{1},\cdots,\widetilde
c_{m})^{T}=(\widetilde{L}^{*}_{m}\widetilde{L}_{m})^{-1}\widetilde{L}_{m}^{*}\
\tilde {\rm
f}$.}\\

\multicolumn{6}{l}{Output: $\widetilde c=(\widetilde c_{1},\cdots,\widetilde c_{m})^{T}\in\Bbb C^m$ coefficients of
 the approximant $ S_{\rm X}^{m}(f):=\sum\limits_{k=1}^{m}\widetilde c_{k}\eta_{k,d}$ }\\

 \multicolumn{6}{r} {which is the unique solution of \eqref{3.5}.} \\

\bottomrule
\end{tabular}
\end{table}

\noindent{\it Proof of Theorem 2.1. }

We use the above notation. Let  $m, n\in\Bbb N$ satisfy
\eqref{3.2}, $x^1,\dots,x^n$ be independent and identically
distributed sample points  from $D_d$ that are distributed
according to the probability measure $\mu_{m,d}$,
$\|\widetilde{H}_{m}-I_{m}\|\le 1/2$, and $S_{\rm X}^{m}(f)$ be
defined as above.
 We estimate $\|f-
S_{\rm X}^{m}(f)\|_{G_d}^2$ for $f\in F_d$.
 We set $$H_d=L_2(D_d, \mu_{m,d}).$$
We recall that $\{\eta_{j,d}\}_{j=1}^\infty$ is an orthonormal
basis in $G_d=L_2(D_d,\rho_d(x)dx)$, and hence $\{\tilde
\eta_{j,d}\}_{j=1}^\infty$ is an orthonormal system in
$H_d=L_2(D_d,\mu_{m,d})$, where $$ \tilde \eta_{j,d}:=\frac
{\eta_{j,d}} {\sqrt{h_{m,d}}},\ \ \langle  \tilde \eta_{j,d},
 \tilde \eta_{k,d}\rangle_{H_d}=\langle \eta_{j,d},\eta_{k,d}, \rangle_{G_d}=\dz_{i,j}.$$

For $f\in F_d\subset G_d$, we have
$$f=\sum_{k=1}^\infty \langle f,
\eta_{k,d} \rangle_{G_d}\, \eta_{k,d}.$$ We note that
$f-A_{m,d}^*(f)$ is orthogonal to the space $V_m$ with respect to
the inner product $\langle\cdot,\cdot\rangle_{G_d}$, and
$$A_{m,d}^*(f)-S^{m}_{\rm X}(f)=S^{m}_{\rm X}(f-A_{m,d}^*(f))\in V_m:={\rm span}\{\eta_{1,d},\dots, \eta_{m,d}\}, $$
where $$A_{m,d}^*(f)=\sum\limits_{k=1}^{m}\langle
f,\eta_{k,d}\rangle_{G_d}\eta_{k,d}.$$ It follows that
\begin{align*}\|f-S_{\rm
X}^{m}(f)\|_{G_d}^2&=\|f-A_{m,d}^*(f)\|_{G_d}^2+\|S_{\rm
X}^{m}(f-A_{m,d}^*(f))\|_{G_d}^2\\ &=\|g\|_{G_d}^2+\|S_{\rm
X}^{m}(g)\|_{G_d}^2,\end{align*}where $g:=f-A_{m,d}^*(f)$.

We recall that
$$S_{\rm X}^m(g)=\sum_{k=1}^m\widetilde{c}_k\eta_{k,d},\,\widetilde{c}=(\widetilde{c}_1,\dots,\widetilde{c}_m)^T
=(\widetilde{L}_{m}^{*}\widetilde{L}_{m})^{-1}(\widetilde{L}_{m})^{*}\widetilde{\rm
g},$$where
$$\widetilde{\rm g}:=(\widetilde{g}(x^1),\cdots,\widetilde{g}(x^n))^T,\ \ \ \widetilde{g}:=\frac{g}{\sqrt{h_{m,d}}}.$$
 Since $\{\eta_{k,d}\}_{k=1}^{\infty}$ is an orthonormal
system in $G_d$, we get
\begin{align*}
\|S_{\rm X}^{m}(g)\|_{G_d}^2=\|\widetilde{c}\|_{2}^2&=
\|((\widetilde{L}_{m})^{*}\widetilde{L}_{m})^{-1}(\widetilde{L}_{m})^{*}\widetilde{\rm g}\|_{2}^2\\
&\leq\|((\widetilde{L}_{m})^{*}\widetilde{L}_{m})^{-1}\|\cdot\|(\widetilde{L}_{m})^{*}\widetilde{\rm g}\|_{2}^{2}\\
&\leq\frac{4}{n^{2}}\|(\widetilde{L}_{m})^{*}\widetilde{\rm g}\|_{2}^{2},
\end{align*}
where $\|\cdot\|_2$ is the Euclidean norm of a vector.  We have
\begin{align*}
\|(\widetilde{L}_{m})^{*}\widetilde{\rm
g}\|_{2}^{2}&=\sum_{k=1}^{m}\Big|\sum_{j=1}^{n}\overline{\widetilde{\eta}_{k,d}(x^{j})}\cdot
\widetilde{g}(x^{j})\Big|^{2}\\
&=\sum_{k=1}^{m}\sum_{j=1}^{n}\sum_{i=1}^{n}\overline{\widetilde{\eta}_{k,d}(x^{j})}
\widetilde{g}(x^{j})\widetilde{\eta}_{k,d}(x^{i})\overline{\widetilde{g}(x^{i})}.
\end{align*}
It follows that
\begin{align*}
J&=\int_{\|\widetilde{H}_{m}-I_{m}\|\leq\frac{1}{2}}\|(\widetilde{L}_{m})^{*}\widetilde{\rm g}\|_{2}^{2}\,d\mu_{m,d}(x^{1})\ldots d\mu_{m,d}(x^{n})\\
&\le\int_{D_d^n}\|(\widetilde{L}_{m})^{*}\widetilde{\rm g}\|_{2}^{2}\,d\mu_{m,d}(x^{1})\ldots d\mu_{m,d}(x^{n})\\
&\le\sum_{k=1}^{m}\sum_{i,j=1}^{n}\int_{D_d^n}\overline{\widetilde{\eta}_{k,d}(x^{j})}
\widetilde{g}(x^{j})\widetilde{\eta}_{k,d}(x^{i})\overline{\widetilde{g}(x^{i})}
\,d\mu_{m,d}(x^{1})\ldots d\mu_{m,d}(x^{n}),
\end{align*}
Noting that for  $i\neq j$ and $1\le k\le m$,
\begin{align*}\int_{D_d^n}&\overline{\widetilde{\eta}_{k,d}(x^{j})}
\widetilde{g}(x^{j})\widetilde{\eta}_{k,d}(x^{i})\overline{\widetilde{g}(x^{i})}
\,d\mu_{m,d}(x^{1})\ldots d\mu_{m,d}(x^{n})\\
&=|\langle\widetilde{g},\widetilde{\eta}_{k,d}\rangle_{H_d}|^2
=|\langle g,\eta_{k,d}\rangle_{G_d}|^2=0,\end{align*} and
$h_{m,d}(x)=\frac{1}{m}\sum\limits_{k=1}^{m}|\eta_{k,d}(x)|^{2}$,
we continue to get \begin{align*}
J&\le n\sum_{k=1}^m\|\widetilde{\eta}_{k,d}\cdot\widetilde{g}\|_{H_d}^2\\
&=n\sum_{k=1}^m\int_{D_d^n}|\widetilde{g}(x)\widetilde{\eta}_{k,d}(x)|^2
\rho_d(x)h_{m,d}(x)\,dx\\
&=n\sum_{k=1}^m\int_{D_d^n}\frac{|g(x)\eta_{k,d}(x)|^2}{h_{m,d}(x)}
\rho_d(x)\,dx\\
&=n\int_{D_d^n}m|g(x)|^2\rho_d(x)\,dx\\
&=nm\cdot\|g\|_{G_d}^2.
\end{align*}
Hence,  we have
\begin{align} &\quad\ \int_{\|\widetilde{H}_{m}-I_{m}\|\leq\frac{1}{2}}\|f-S_X^m(f)\|_{G_d}^{2}\,d\mu_{m,d}(x^{1})\ldots
d\mu_{m,d}(x^{n})\notag\\
&\le\|g\|_{G_d}^2+\frac{4}{n^2}J\le(1+\frac{4m}{n})\|g\|_{G_d}^2=(1+\frac{4m}{n})\|f-A_{m,d}^*(f)\|_{G_d}^2
.\label{3.111}
\end{align}
By Fubini's theorem, \eqref{3.4}, \eqref{3.111}, and \eqref{2.2-0}
we have
\begin{align*}&\quad\ \mathbb{E}\Big(\int_{F_d}\|f- S_{\rm X}^{m}(f)\|_{G_d}^2\mu_d(df)\ \Big|\ \|\widetilde{H}_{m}-I_{m}\|\le
1/2\Big)\\ &=\int_{F_d} \mathbb{E}\Big(\|f- S_{\rm
X}^{m}(f)\|_{G_d}^2\ \Big|\ \|\widetilde{H}_{m}-I_{m}\|\le
1/2\Big)\, \mu_d (df)\\
&=\frac{\int_{F_d}\int_{\|\widetilde{H}_{m}-I_{m}\|\leq\frac{1}{2}}\|f-S_{X}^{m}f\|^{2}_{G_d}
d\mu_{m,d}(x^{1})\ldots d\mu_{m,d}(x^{n})\,\mu_d(df)}{\mathbb{P}(\|\widetilde{H}_{m}-I_{m}\|\leq\frac{1}{2})}\\
&\leq\left(1+\frac{4m}{n}\right)\frac{1}{1-\delta}\,\int_{F_d}\|f-A_{m,d}^*(f)\|_{G_d}^2\mu_d(df)\\
&=\left(1+\frac{4m}{n}\right)\frac{1}{1-\delta}\,(e^{\rm avg}(m,d;
\Lz^{\rm all}))^2.
\end{align*}
By the mean value theorem, we conclude that there are sample
points ${\rm X}^*=\{x^{1*}, \dots, x^{n*}\}$ such that
$\|\widetilde{H}_{m}^*-I_{m}\|\leq\frac{1}{2}$ and
$$\int_{F_d}\|f-S_{\rm X^*}^m(f)\|_{G_d}^2\mu(df)=\mathbb{E}\Big(\int_{F_d}\|f- S_{\rm
X}^{m}(f)\|_{G_d}^2\mu_d(df)\ \Big|\
\|\widetilde{H}_{m}-I_{m}\|\le 1/2\Big).$$ We obtain that
\begin{align*}(e^{\rm avg}(n,d;\Lz^{\rm std}))^2&\le \int_{F_d}\|f-S_{\rm
X^*}^m(f)\|_{G_d}^2\mu(df)\\ &\le
\left(1+\frac{4m}{n}\right)\frac{1}{1-\delta}\,(e^{\rm avg}(m,d;
\Lz^{\rm all}))^2.\end{align*}

 This completes
the proof of Theorem 2.1. $\hfill\Box$

\

We stress that Theorem 2.1 is not constructive since we do not
know how to choose the sample points ${\rm X}^*=\{x^{1*}, \dots,
x^{n*}\}$. We only know that there exist ${\rm X}^*=\{x^{1*},
\dots, x^{n*}\}$ for which the average case error of the weighted
least squares algorithm $S^m_{\rm X^*}$ enjoys the average case
error bound of Theorem 2.1.

\

\noindent{\it Proof of Theorem 2.2.}\

Applying Theorem 2.1 with $\delta=\frac{1}{2^{\sqrt{2}}}$, we
obtain
\begin{equation}
e^{\rm avg}(n,d;\Lz^{\rm
std})\leq\Big(1+\frac{4m}{n}\Big)^{\frac{1}{2}}\Big(\frac{2^{\sqrt{2}}}{2^{\sqrt{2}}-1}\Big)^{\frac12}e^{\rm
avg}(m,d;\Lz^{\rm all}),\label{4.1}
\end{equation}
where $m,n\in\Bbb N$, and $$
m=\Big\lfloor\frac{n}{48\sqrt{2}\ln(4n)}\Big\rfloor.$$ Since
$1+\frac{4m}{n}\leq1+\frac{1}{12\sqrt{2}\ln(4n)}\leq2$, by
\eqref{4.1} we get
\begin{equation}\label{4.01}e^{\rm avg}(n,d;\Lz^{\rm std})\leq 4e^{\rm avg}(m,d;\Lz^{\rm
all}).\end{equation} It follows that
\begin{align}
n^{\star}(\varepsilon,d;\Lambda^{\rm std})&=\min\big\{n\, \big|\,  e^{\rm avg}(n,d;\Lz^{\rm std})\leq\varepsilon{\rm CRI}_d\big\}\nonumber\\
&\leq\min\big\{n\,\big|\, 4e^{\rm avg}(m,d;\Lz^{\rm all})\leq\varepsilon{\rm CRI}_d\big\}\nonumber\\
&=\min\big\{n\mid e^{\rm avg}(m,d;\Lz^{\rm
all})\leq\frac{\varepsilon}{4}{\rm CRI}_d\big\}.\label{4.2}
\end{align}
We note that
$$m=\Big\lfloor\frac{n}{48\sqrt{2}\ln(4n)}\Big\rfloor\geq\frac{n}{48\sqrt{2}\ln(4n)}-1.$$
This inequality is equivalent to
\begin{equation}
4n\leq192\sqrt{2}(m+1)\ln(4n).\label{4.3}
\end{equation}
Taking logarithm on both sides of \eqref{4.3}, and using the
inequality $\ln x\leq\frac{1}{2}x$ for $x\ge 1$, we get
$$\ln(4n)\leq \ln(m+1)+\ln(192\sqrt{2})+\ln\ln(4n),$$and
$$\frac{1}{2}\ln(4n)\leq\ln(4n)-\ln\ln(4n)\leq\ln(m+1)+\ln(192\sqrt{2}).$$
It follows  from \eqref{4.3} that
\begin{equation}
n\leq96\sqrt{2}(m+1)(\ln(m+1)+\ln(192\sqrt{2})).\label{4.4}
\end{equation}
By \eqref{4.2} and \eqref{4.4} we obtain
\begin{align}\label{2.14}n^{\star}(\varepsilon,d;\Lambda^{\rm std})\le  96\sqrt2 \Big(n^{\star}(\frac\varepsilon4,d;\Lambda^{\rm all})+1\Big)
\Big(\ln\big(n^{\star}(\frac\varepsilon4,d;\Lambda^{\rm
all})+1\big)+\ln(192\sqrt{2})\Big). \end{align} Since  for any
$\oz>0$,
\begin{equation*}\label{2.16}\sup_{x\ge1}\frac{96\sqrt2(\ln x +\ln(192\sqrt{2}))}{x^{\oz}}= C_\oz
<+\infty,
\end{equation*}
we obtain \eqref{2.17}.

For sufficiently small $\delta>0$ and
 $m,n\in\mathbb{N}$ satisfying
$$m=\left\lfloor\frac{n}{48(\sqrt{2}\ln(2n)-\ln{\delta})}\right\rfloor,$$
by Theorem 2.1 we have
\begin{align*}
e^{\rm avg}(n,d;\Lz^{\rm std})&\le\Big(1+\frac{4m}{n}\Big)^{\frac12}\frac{1}{\sqrt{1-\delta}}\,e^{\rm avg}(m,d;\Lz^{\rm all})\\
&\le\Big(1+\frac{1}{12\big(\sqrt{2}\ln(2n)+\ln{\frac{1}{\delta}}\big)}\Big)^{\frac12}\frac{1}{\sqrt{1-\delta}}\,e^{\rm avg}(m,d;\Lz^{\rm all})\\
&\le\Big(1+\frac{1}{12\ln{\frac{1}{\delta}}}\Big)^{\frac12}\frac{1}{\sqrt{1-\delta}}\,e^{\rm
avg}(m,d;\Lz^{\rm all})=A_{\delta}\,e^{\rm avg}(m,d;\Lz^{\rm
all}),
\end{align*}
where
$A_{\delta}=\Big(1+\frac{1}{12\ln{\frac{1}{\delta}}}\Big)^{\frac12}\frac{1}{\sqrt{1-\delta}}$.

Using the same method used in the proof of \eqref{4.2}, we have
$$ n^{\star}(\varepsilon,d;\Lambda^{\rm std})\le \min\big\{n\mid e^{\rm avg}(m,d;\Lz^{\rm
all})\leq\frac{\varepsilon}{A_\dz}{\rm CRI}_d\big\}.$$ We note
that
$$n\le48\big(\sqrt{2}\ln(2n)+\ln{\frac{1}{\delta}}\big)(m+1).$$
Taking logarithm on both sides, and using the inequalities $\ln
x\leq\frac{x}{4}$ for $x\ge 9$ and $a+b\le ab$ for $a,b\ge2$, we
get
\begin{align*}
\ln n&\le \ln48+\ln\big(\sqrt{2}\ln(2n)+\ln{\frac{1}{\delta}}\big)+\ln(m+1)\\
&\le \ln48+\ln(\sqrt{2}\ln(2n))+\ln\ln{\frac{1}{\delta}}+\ln(m+1)\\
&\le
\ln48+\frac{\sqrt{2}}{4}\ln(2n)+\ln\ln{\frac{1}{\delta}}+\ln(m+1).
\end{align*}
Since
$$\frac{\sqrt{2}}{4}\ln(2n)\le \ln n-\frac{\sqrt{2}}{4}\ln(2n)\ \ \text{for}\ \ n\ge9,$$
we get
$$\sqrt{2}\ln(2n)\le 4\big(\ln48+\ln\ln{\frac{1}{\delta}}+\ln(m+1)\big).$$
It follows that
$$n\le48\big(4\big(\ln48+\ln\ln{\frac{1}{\delta}}+\ln(m+1)\big)+\ln{\frac{1}{\delta}}\big)(m+1).$$
We conclude that for sufficiently small $\delta>0$,
\begin{align}\label{4.4-1}
n^{\star}(\varepsilon,d;\Lambda^{\rm std}) \le
48\Big(4\big(\ln48&+\ln\ln{\frac{1}{\delta}}+
\ln\big(n^{\star}(\frac{\varepsilon}{A_\delta},d;\Lambda^{\rm
all})+1\big)\big)\\ &+\ln{\frac{1}{\delta}}\Big)
\big(n^{\star}(\frac{\varepsilon}{A_\delta},d;\Lambda^{\rm
all})+1\big).\nonumber
\end{align}

Since for sufficiently small $\oz,\dz>0$, there holds
\begin{equation*}\label{2.16-0}\sup_{x\ge1}\frac{48(4(\ln48+\ln\ln{\frac{1}{\delta}}+ \ln
x)+\ln{\frac{1}{\delta}})}{x^{\oz}}= C_{\oz,\dz} <+\infty,
\end{equation*}we get \eqref{2.18}.

Theorem 2.2 is proved.
   $\hfill\Box$

\section{Equivalence results of algebraic tractability}

First we   consider the equivalences of  ALG-PT and ALG-SPT for
$\Lambda^{\rm std}$ and $\Lambda^{\rm all}$ in the average case
setting. The equivalent results for the normalized error criterion
can be found in \cite{HWW} and \cite[Theorem 24.10]{NW3}.  For the
absolute error criterion, \cite[Theorem 24.11]{NW3} shows the
equivalence of ALG-PT under the condition
\begin{equation}\label{4.11}\Gz_d\le C d^{v} \ \ {\rm for \ all}\ d\in\Bbb N, \ {\rm  some}\ C>0,\ {\rm and  \ some}\ v\ge 0,\end{equation}
and the equivalence of ALG-SPT under the condition \eqref{4.11}
with $v=0$. Xu obtained in \cite[Theorem 3.1]{X5} the equivalence
of ALG-PT under the weaker condition
\begin{equation}\label{4.11-1}\Gz_d\le \exp(C d^{v} )\ \ {\rm for \ all}\ d\in\Bbb N, \ {\rm  some}\ C>0,\ {\rm and  \ some}\ v\ge 0.\end{equation}

 We obtain the following  equivalent
results of ALG-PT and  ALG-SPT  without any condition. Hence, the
condition \eqref{4.11} or \eqref{4.11-1} is unnecessary. This
solves Open Problem 117 as posed by Novak and Wo\'zniakowski in
\cite{NW3}.

\begin{thm}
We consider the problem ${\rm APP}=\{{\rm APP}_d\}_{d\in \Bbb N}$
in the average case   setting for the absolute error criterion.
Then,

$\bullet$   ${\rm ALG}$-${\rm PT}$ for $\Lambda^{\rm all}$  is
equivalent to ${\rm ALG}$-${\rm PT}$
 for $\Lambda^{\rm std}$ .

$\bullet$  ${\rm ALG}$-${\rm SPT}$  for $\Lambda^{\rm all}$ is
equivalent to ${\rm ALG}$-${\rm SPT}$  for $\Lambda^{\rm std}$. In
this case,  the exponents of ${\rm ALG}$-${\rm SPT}$ for
$\Lambda^{\rm all}$ and  $\Lambda^{\rm std}$ are the same.
\end{thm}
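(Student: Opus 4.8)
The plan is to deduce both equivalences directly from Theorem~2.2, using only the inequality \eqref{2.17}, together with the elementary bound \eqref{2.4-1} coming from $\Lz^{\rm std}\subset\Lz^{\rm all}$. Since \eqref{2.4-1} gives $n^{\rm ABS}(\varepsilon,d;\Lz^{\rm all})\le n^{\rm ABS}(\varepsilon,d;\Lz^{\rm std})$ for all $\varepsilon,d$, any (strongly) polynomial upper bound for $\Lz^{\rm std}$ is automatically one for $\Lz^{\rm all}$, and likewise ${\rm ALG}$-$p^{\rm ABS}(\Lz^{\rm all})\le{\rm ALG}$-$p^{\rm ABS}(\Lz^{\rm std})$. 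So only the converse implications need an argument.

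For the converse in the ALG-PT case, I would assume $n^{\rm ABS}(\varepsilon,d;\Lz^{\rm all})\le C d^{q}\varepsilon^{-p}$ for all $d\in\Bbb N$ and $\varepsilon\in(0,1)$, fix any value of $\oz$ in \eqref{2.17} (say $\oz=1$), and substitute this bound into the right-hand side of \eqref{2.17}. Using $d\ge 1$ and $\varepsilon^{-p}\ge 1$ to absorb the ``$+1$'' and the factor $4^{p}$, one gets
\begin{equation*}
n^{\rm ABS}(\varepsilon,d;\Lz^{\rm std})\le C_\oz\bigl(C4^{p}d^{q}\varepsilon^{-p}+1\bigr)^{1+\oz}\le C'\,d^{q(1+\oz)}\varepsilon^{-p(1+\oz)},
\end{equation*}
which is ALG-PT for $\Lz^{\rm std}$ with exponents $p(1+\oz)$ and $q(1+\oz)$.

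For ALG-SPT I would run the same substitution but keep track of exponents. Writing $p^{*}={\rm ALG}$-$p^{\rm ABS}(\Lz^{\rm all})$, for each $\tau>0$ there is $C>0$ with $n^{\rm ABS}(\varepsilon,d;\Lz^{\rm all})\le C\varepsilon^{-(p^{*}+\tau)}$ for all $d,\varepsilon$ (now with no $d$-factor), so \eqref{2.17} gives, for every $\oz>0$,
\begin{equation*}
n^{\rm ABS}(\varepsilon,d;\Lz^{\rm std})\le C_\oz\bigl(C4^{p^{*}+\tau}+1\bigr)^{1+\oz}\varepsilon^{-(p^{*}+\tau)(1+\oz)}.
\end{equation*}
Hence ALG-SPT holds for $\Lz^{\rm std}$ with ${\rm ALG}$-$p^{\rm ABS}(\Lz^{\rm std})\le(p^{*}+\tau)(1+\oz)$; letting $\oz\to 0^{+}$ and then $\tau\to 0^{+}$ yields ${\rm ALG}$-$p^{\rm ABS}(\Lz^{\rm std})\le p^{*}$, and combined with the reverse inequality noted in the first paragraph the two exponents coincide.

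I do not expect a genuine obstacle here: Theorem~2.2 already carries all the analytic weight, and what remains is bookkeeping. The one point needing a little care is that \eqref{2.17} inflates the exponent by the factor $1+\oz$; this is harmless for ALG-PT (any polynomial bound suffices), and for ALG-SPT it is removed by the limiting argument $\oz\to0$, $\tau\to0$, which is the sole non-mechanical step of the proof.
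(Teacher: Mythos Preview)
Your proposal is correct and follows essentially the same route as the paper's proof: both directions are handled exactly as you describe, using \eqref{2.4-1} for the trivial implication and \eqref{2.17} from Theorem~2.2 for the converse, with the $(1+\oz)$-inflation removed in the SPT case by letting $\oz\to0$. The only cosmetic difference is that the paper works directly with an arbitrary admissible exponent $p$ rather than introducing the auxiliary parameter $\tau$, but the argument is the same.
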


\begin{proof} It follows from \eqref{2.4-1} that ALG-PT (ALG-SPT) for $\Lambda^{\rm std}$ means ALG-PT (ALG-SPT) for  $\Lambda^{\rm all}$.
It suffices to show that ALG-PT (ALG-SPT) for $\Lambda^{\rm all}$
 means that ALG-PT (ALG-SPT) for
$\Lambda^{\rm std}$.

Suppose that ALG-PT  holds for $\Lambda^{\rm all}$. Then there
exist $ C\ge 1$ and non-negative $ p,q$ such that
\begin{equation}n^{\rm ABS}(\varepsilon,d;\Lambda^{\rm
all})\leq Cd^{q}\varepsilon^{-p},\ \  \text{for all}\ \
d\in\mathbb{N},\ \varepsilon\in(0,1).\label{4.6-0}\end{equation}
It follows from \eqref{2.17} and \eqref{4.6-0} that
\begin{align*}
n^{\rm  ABS}(\varepsilon,d;\Lambda^{\rm
std})&\leq C_\oz \(Cd^{q}(\frac{\varepsilon}{4})^{-p}+1\)^{1+\oz}\\
&\le C_\oz (2C 4^{p})^{1+\oz}d^{q(1+\oz)}\vz^{-p(1+\oz)},
\end{align*}which means that ALG-PT  holds for  $\Lambda^{\rm std}$.

If ALG-SPT holds  for $\Lambda^{\rm all}$, then \eqref{4.6-0}
holds with $q=0$. We obtain
$$ n^{\rm ABS}(\varepsilon,d;\Lambda^{\rm
std})\le C_\oz(2C 4^{p})^{1+\oz}\vz^{-p(1+\oz)},$$which means that
ALG-SPT  holds for  $\Lambda^{\rm std}$. Furthermore, since $\oz$
can be arbitrary small, we get
\begin{align*} {\rm ALG\!-\!}p^{\rm ABS}(\Lz^{\rm std})\le
{\rm ALG\!-\!}p^{\rm  ABS}(\Lz^{\rm all})\le {\rm ALG\!-\!}p^{\rm
 ABS}(\Lz^{\rm std}),
\end{align*} which means that the exponents of ${\rm ALG}$-${\rm
SPT}$ for $\Lambda^{\rm all}$ and  $\Lambda^{\rm std}$ are the
same. This completes the proof of Theorem 4.1.
\end{proof}

Next we consider the equivalence of  ALG-QPT for $\Lambda^{\rm
std}$ and $\Lambda^{\rm all}$ in the average case setting.
 The result for the
normalized error criterion can be found in \cite[Theorem
24.12]{NW3}. For the absolute error criterion, \cite[Theorem
24.13]{NW3} shows the equivalence of ALG-QPT under the condition
\begin{equation*}\underset{d\to\infty}{\lim \sup}\ \Gz_d<\infty.\end{equation*}
Xu obtained in \cite[Theorem 3.2]{X5} the equivalence of ALG-QPT
under the weaker condition \eqref{4.11-1}.

 We obtain the following  equivalent
results of ALG-QPT  without any condition. Hence, the condition
\eqref{4.11-1}  is unnecessary. This solves Open Problem 118 as
posed by Novak and Wo\'zniakowski in \cite{NW3}.

\begin{thm}
We consider the problem $\rm APP=\{APP_d\}_{d\in\mathbb{N}}$ in
the average case  setting for the absolute error criterion.
 Then, ${\rm ALG}$-${\rm QPT}$ for $\Lambda^{\rm all}$  is
equivalent to ${\rm ALG}$-${\rm QPT}$
 for $\Lambda^{\rm std}$. In this case,  the exponents
 of ${\rm ALG}$-${\rm QPT}$ for
$\Lambda^{\rm all}$ and  $\Lambda^{\rm std}$ are the same.
\end{thm}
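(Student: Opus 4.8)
The plan is to mimic the proof of Theorem~4.1, using the quasi-polynomial transfer inequality \eqref{2.17} from Theorem~2.2 in place of the polynomial one. Since $\Lambda^{\rm std}\subset\Lambda^{\rm all}$, inequality \eqref{2.4-1} immediately gives that ALG-QPT for $\Lambda^{\rm std}$ implies ALG-QPT for $\Lambda^{\rm all}$, and moreover ${\rm ALG}\text{-}t^{\rm ABS}(\Lz^{\rm all})\le {\rm ALG}\text{-}t^{\rm ABS}(\Lz^{\rm std})$. So the substance is the reverse direction: assuming ALG-QPT for $\Lambda^{\rm all}$, produce ALG-QPT for $\Lambda^{\rm std}$ with the same exponent.

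First I would write down the hypothesis: there are $C>0$ and $t\ge 0$ such that
$$n^{\rm ABS}(\varepsilon,d;\Lambda^{\rm all})\le C\exp\bigl(t(1+\ln d)(1+\ln\varepsilon^{-1})\bigr),\qquad d\in\mathbb N,\ \varepsilon\in(0,1).$$
Fix $\oz>0$. Then plug $\varepsilon/4$ into this bound and feed the result into \eqref{2.17}. The key estimate is that
$$\Bigl(C\exp\bigl(t(1+\ln d)(1+\ln(4\varepsilon^{-1}))\bigr)+1\Bigr)^{1+\oz}\le \bigl(2C\bigr)^{1+\oz}\exp\bigl((1+\oz)t(1+\ln d)(1+\ln 4+\ln\varepsilon^{-1})\bigr),$$
and then the elementary inequality $1+\ln 4+\ln\varepsilon^{-1}\le (1+\ln 4)(1+\ln\varepsilon^{-1})$ absorbs the $\ln 4$ into the constant: the exponent becomes $(1+\oz)t(1+\ln 4)(1+\ln d)(1+\ln\varepsilon^{-1})$, and $(1+\oz)(1+\ln 4)t$ plays the role of the new $t$, while $C_\oz(2C)^{1+\oz}$ plays the role of the new $C$. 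Hence ALG-QPT holds for $\Lambda^{\rm std}$ with exponent at most $(1+\oz)(1+\ln 4)t$. This is not yet sharp because of the stray factor $(1+\ln 4)$. To get the exponents to match I would instead use the \emph{sharper} transfer inequality \eqref{2.18}: given any small $\dz>0$, for $\oz$ small enough
$$n^{\rm ABS}(\varepsilon,d;\Lambda^{\rm std})\le C_{\oz,\dz}\bigl(n^{\rm ABS}(\tfrac{\varepsilon}{A_\dz},d;\Lambda^{\rm all})+1\bigr)^{1+\oz},$$
where $A_\dz=\bigl(1+\tfrac{1}{12\ln(1/\dz)}\bigr)^{1/2}(1-\dz)^{-1/2}\to 1$ as $\dz\to 0$. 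Running the same computation with $A_\dz$ in place of $4$, the new exponent is $(1+\oz)(1+\ln A_\dz)t$, and letting first $\oz\to 0$ and then $\dz\to 0$ (so $\ln A_\dz\to 0$) gives ${\rm ALG}\text{-}t^{\rm ABS}(\Lz^{\rm std})\le t$ for every admissible $t$, hence ${\rm ALG}\text{-}t^{\rm ABS}(\Lz^{\rm std})\le {\rm ALG}\text{-}t^{\rm ABS}(\Lz^{\rm all})$. Combined with the trivial reverse inequality, the exponents coincide.

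The main obstacle, such as it is, is purely bookkeeping: one must be careful that the constants $C_\oz$ (or $C_{\oz,\dz}$) in \eqref{2.17}/\eqref{2.18} do not secretly depend on $d$ or $\varepsilon$ — they do not, by Theorem~2.2 — and that the $(1+\oz)$ power multiplying a product $(1+\ln d)(1+\ln\varepsilon^{-1})$ stays a quasi-polynomial exponent rather than degrading the form (it does not: $(1+\oz)t$ is again a constant). There is no real analytic difficulty; the only delicate point is the order of the limits $\oz\to 0$, $\dz\to 0$ needed to recover the exact exponent, exactly as in the treatment of ALG-SPT in Theorem~4.1. I would close by remarking, as in the previous proof, that since the transfer is achieved by a weighted least-squares algorithm with (non-constructively chosen) sample points, the equivalence is existential, not constructive.
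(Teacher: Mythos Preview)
Your proposal is correct and follows essentially the same route as the paper: the paper also goes straight to the sharper transfer inequality \eqref{2.18}, bounds $1+\ln A_\dz+\ln\varepsilon^{-1}\le (1+\ln A_\dz)(1+\ln\varepsilon^{-1})$, and then lets $(\oz,\dz)\to(0,0)$ to match the exponents. Your preliminary pass through \eqref{2.17} is extra but harmless; otherwise the arguments are identical.
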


\begin{proof}Similar to the proof of Theorem 4.1,
 it is enough to prove that ALG-QPT for
$\Lambda^{\rm all}$ implies ALG-QPT for
 $\Lambda^{\rm std}$.

Suppose that  ALG-QPT holds for $\Lambda^{\rm all}$. Then  there
exist $ C\ge 1$ and non-negative $t$ such that
\begin{equation}\label{4.9}n^{\rm
ABS}(\varepsilon,d;\Lambda^{\rm all})\leq C
\exp(t(1+\ln{d})(1+\ln{\varepsilon^{-1}})),\ \text{for all}\
d\in\mathbb{N},\ \varepsilon\in(0,1).\end{equation}

For sufficiently small $\delta>0$ and $\oz>0$,  it follows from
\eqref{2.18} and \eqref{4.9} that
\begin{align*}
n^{\rm ran,\star}(\varepsilon,d;\Lambda^{\rm std})& \le
C_{\oz,\dz}
 \big(n^{\rm
wor,\star}(\frac{\varepsilon}{A_\delta},d;\Lambda^{\rm
all})+1\big)^{1+\oz}\\
&\leq C_{\oz,\dz}\(C
\exp\big(t(1+\ln{d})\big(1+\ln\big(\frac{\varepsilon}{A_\dz}\big)^{-1})\big)+1\)^{1+\oz}
\\ &\le C_{\oz,\dz}(2C)^{1+\oz}\exp\big(
(1+\oz)t(1+\ln{d})(1+\ln A_\dz +\ln\varepsilon^{-1})\big)\\&\le
C_{\oz,\dz}(2C)^{1+\oz}\exp\big( (1+\oz)t(1+\ln
A_\dz)(1+\ln{d})(1+\ln\varepsilon^{-1})\big),
\end{align*}where $t^*=(1+\oz)(1+\ln A_\dz)t$,
$A_{\delta}=\big(1+\frac{1}{12\ln{\frac{1}{\delta}}}\big)^{\frac12}\frac{1}{\sqrt{1-\delta}}$.
  This implies that ALG-QPT holds for
 $\Lambda^{\rm std}$. Furthermore,
taking the infimum over $t$ for which \eqref{4.9} holds, and
noting that $ \lim\limits_{(\dz,\oz)\to (0,0)}(1+\oz)(1+\ln
A_\dz)=1$,
 we get that
\begin{align*}{\rm ALG\!-\!}t^{\rm  ABS}(\Lz^{\rm std})\le {\rm ALG\!-\!}t^{\rm
 ABS}(\Lz^{\rm all}).
\end{align*}
It follows from \eqref{2.4-1} that
\begin{align*}{\rm ALG\!-\!}t^{\rm  ABS}(\Lz^{\rm std})\le {\rm ALG\!-\!}t^{\rm
 ABS}(\Lz^{\rm all})\le {\rm ALG\!-\!}t^{\rm  ABS}(\Lz^{\rm std}),
\end{align*}
which means that
 the exponents ALG-$t^{\rm  ABS}(\Lz^{\rm all})$
and ALG-$t^{\rm  ABS}(\Lz^{\rm std})$ are equal if ALG-QPT holds
for $\Lambda^{\rm all}$.  This completes the proof of Theorem 4.2.
\end{proof}

Now we   consider the equivalence of  ALG-WT for $\Lambda^{\rm
std}$ and $\Lambda^{\rm all}$  in the average case setting .
 The result for the normalized error criterion can be
found in \cite[Theorem 24.6]{NW3}. For the absolute error
criterion, \cite[Theorem 24.6]{NW3} shows the equivalence of
ALG-WT under the condition
\begin{equation*}\lim_{d\to\infty}\frac{\ln\max(\Gz_d,1)}{d}=0.\end{equation*}

Xu obtained in \cite[Theorem 3.3]{X5} the equivalence of ALG-QPT
under the much weaker condition.
\begin{equation}\label{4.13}\lim_{d\to\infty}\frac{\ln\big(1+\ln\max(\Gz_d,1)\big)}{d}=0.\end{equation}

 We obtain the following  equivalent
results of ALG-WT  without any condition. Hence, the condition
\eqref{4.13}  is unnecessary. This solves Open Problem 116 as
posed by Novak and Wo\'zniakowski in \cite{NW3}.

\begin{thm}
We consider the problem $\rm APP=\{APP_d\}_{d\in\mathbb{N}}$  in
the average case setting  for the absolute error criterion. Then,
${\rm ALG}$-${\rm WT}$ for $\Lambda^{\rm all}$  is equivalent to
${\rm ALG}$-${\rm WT}$
 for $\Lambda^{\rm std}$.
\end{thm}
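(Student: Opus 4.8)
The plan is to follow exactly the template of the proofs of Theorems 4.1 and 4.2, now with the ALG-WT limit condition in place of the polynomial/quasi-polynomial bounds. First, by \eqref{2.4-1} we have $n^{\rm ABS}(\varepsilon,d;\Lambda^{\rm all})\le n^{\rm ABS}(\varepsilon,d;\Lambda^{\rm std})$ for all $\varepsilon\in(0,1)$ and $d\in\mathbb{N}$, so ALG-WT for $\Lambda^{\rm std}$ trivially implies ALG-WT for $\Lambda^{\rm all}$; the whole content of the theorem is the reverse implication, which is what I would concentrate on.

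So assume ALG-WT holds for $\Lambda^{\rm all}$, i.e.
\[
\lim_{\varepsilon^{-1}+d\to\infty}\frac{\ln n^{\rm ABS}(\varepsilon,d;\Lambda^{\rm all})}{\varepsilon^{-1}+d}=0 .
\]
I would fix $\omega=1$ in Theorem 2.2, so that \eqref{2.17} provides a constant $C_1>0$ with
\[
\ln n^{\rm ABS}(\varepsilon,d;\Lambda^{\rm std})\le \ln C_1+2\ln\bigl(n^{\rm ABS}(\tfrac\varepsilon4,d;\Lambda^{\rm all})+1\bigr)
\]
for all $d\in\mathbb{N}$, $\varepsilon\in(0,1)$. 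Next I would divide by $\varepsilon^{-1}+d$ and use the elementary bound $4\varepsilon^{-1}+d\le 4(\varepsilon^{-1}+d)$, equivalently $\frac{1}{\varepsilon^{-1}+d}\le\frac{4}{(\varepsilon/4)^{-1}+d}$, to get
\[
\frac{\ln n^{\rm ABS}(\varepsilon,d;\Lambda^{\rm std})}{\varepsilon^{-1}+d}\le \frac{\ln C_1}{\varepsilon^{-1}+d}+8\cdot\frac{\ln\bigl(n^{\rm ABS}(\tfrac\varepsilon4,d;\Lambda^{\rm all})+1\bigr)}{(\varepsilon/4)^{-1}+d}.
\]
As $\varepsilon^{-1}+d\to\infty$ the first term goes to $0$. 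For the second, I would substitute $\varepsilon'=\varepsilon/4\in(0,\tfrac14)$; then $(\varepsilon')^{-1}+d\to\infty$ as well, and using $\ln(N+1)\le \ln 2+\ln\max(N,1)$ for integers $N\ge0$ together with $\tfrac{\ln 2}{(\varepsilon')^{-1}+d}\to0$, the hypothesis forces $\frac{\ln(n^{\rm ABS}(\varepsilon',d;\Lambda^{\rm all})+1)}{(\varepsilon')^{-1}+d}\to0$. Hence the right-hand side tends to $0$, which is precisely ALG-WT for $\Lambda^{\rm std}$.

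The argument has no real obstacle: the only point requiring care is the scaling bookkeeping — checking that replacing $\varepsilon$ by $\varepsilon/4$ does not leave the limiting regime $\varepsilon^{-1}+d\to\infty$ and inflates the relevant quotient by at most a fixed constant factor — together with the harmless passage from $n^{\rm ABS}(\cdot)$ to $n^{\rm ABS}(\cdot)+1$ inside the logarithm. The same scheme, now with $\varepsilon^{-s}+d^{t}$ or $\varepsilon^{-\alpha}+d^{\beta}$ in the denominator, will deliver the corresponding equivalences for ALG-$(s,t)$-WT and ALG-UWT.
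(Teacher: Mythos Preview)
Your proposal is correct and is essentially identical to the paper's argument: the paper proves this theorem by specializing its Theorem~4.4 (the ALG-$(s,t)$-WT equivalence) to $s=t=1$, and the proof of Theorem~4.4 is exactly the computation you outline, using \eqref{2.17} to bound $\ln n^{\rm ABS}(\varepsilon,d;\Lambda^{\rm std})$, dividing by $\varepsilon^{-s}+d^t$, and absorbing the shift $\varepsilon\mapsto\varepsilon/4$ into a constant factor in the denominator. Your choice $\omega=1$ versus the paper's generic $\omega>0$ and your explicit handling of the $+1$ inside the logarithm are purely cosmetic differences.
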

\begin{proof} The proof is identical to the proof of Theorem 4.4 with $s = t = 1$ for the absolute error criterion. We omit the details. \end{proof}

Finally, we   consider the equivalences of ALG-$(s, t)$-WT and
ALG-UWT for $\Lambda^{\rm std}$ and $\Lambda^{\rm all}$  in the
average case setting.
 The results for the normalized error criterion can be
found in \cite[Theorems 3.4 and 3.5]{X5}. For the absolute error
criterion, \cite[Theorem 3.4]{X5} shows the equivalence  of
ALG-$(s, t)$-WT  under the condition
\begin{equation}\label{4.14}\lim_{d\to\infty}\frac{\ln\big(1+\ln\max(\Gz_d,1)\big)}{d^t}=0.\end{equation}
\cite[Theorem 3.5]{X5} shows the equivalence  of ALG-UWT under the
condition
\begin{equation}\label{4.15}\lim_{d\to\infty}\frac{\ln\big(1+\ln\max(\Gz_d,1)\big)}{d^t}=0\  \ {\rm for \ all\ } t>0.\end{equation}

 We obtain the following  equivalent
results of  ALG-$(s, t)$-WT for
 fixed $s,t>0$ and ALG-UWT  for the absolute error
criterion without any condition. Hence, the condition \eqref{4.14}
or \eqref{4.15} is unnecessary.

\begin{thm}
We consider the problem $\rm APP=\{APP_d\}_{d\in\mathbb{N}}$  in
the average case setting  for the absolute  error criterion. Then
for fixed $s,t>0$,  ${\rm ALG}$-$(s,t)$-${\rm WT}$ for
$\Lambda^{\rm all}$ is equivalent to ${\rm ALG}$-$(s,t)$-${\rm
WT}$
 for $\Lambda^{\rm std}$.
\end{thm}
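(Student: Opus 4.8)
The plan is to mirror the argument given for ALG-QPT in Theorem 4.2, now applied to the notion ALG-$(s,t)$-WT for fixed $s,t>0$. As in the previous equivalences, one direction is trivial: since $\Lz^{\rm std}\subset\Lz^{\rm all}$, inequality \eqref{2.4-1} gives $n^{\rm ABS}(\va,d;\Lz^{\rm all})\le n^{\rm ABS}(\va,d;\Lz^{\rm std})$, so ALG-$(s,t)$-WT for $\Lz^{\rm std}$ immediately implies ALG-$(s,t)$-WT for $\Lz^{\rm all}$. Hence it suffices to prove the forward implication: ALG-$(s,t)$-WT for $\Lz^{\rm all}$ implies ALG-$(s,t)$-WT for $\Lz^{\rm std}$.

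Assume ALG-$(s,t)$-WT holds for $\Lz^{\rm all}$, i.e.
$$\lim_{\vz^{-1}+d\to\infty}\frac{\ln n^{\rm ABS}(\va,d;\Lz^{\rm all})}{\vz^{-s}+d^{t}}=0.$$
I would apply the bound \eqref{2.17} from Theorem 2.2 with a fixed small $\oz>0$, which gives
$$n^{\rm ABS}(\vz,d;\Lz^{\rm std})\le C_\oz\Big(n^{\rm ABS}(\tfrac\vz4,d;\Lz^{\rm all})+1\Big)^{1+\oz}.$$
Taking logarithms,
$$\ln n^{\rm ABS}(\vz,d;\Lz^{\rm std})\le \ln C_\oz+(1+\oz)\ln\Big(n^{\rm ABS}(\tfrac\vz4,d;\Lz^{\rm all})+1\Big).$$
Dividing by $\vz^{-s}+d^{t}$ and letting $\vz^{-1}+d\to\infty$, the term $\ln C_\oz/(\vz^{-s}+d^{t})\to0$, and for the remaining term I would use that $\ln(x+1)\le\ln x+1$ (for $x\ge1$) together with the substitution $\vz\mapsto \vz/4$: since $(\vz/4)^{-s}=4^{s}\vz^{-s}$, we have $\vz^{-s}+d^{t}\ge 4^{-s}\big((\vz/4)^{-s}+d^{t}\big)$, so
$$\frac{\ln n^{\rm ABS}(\tfrac\vz4,d;\Lz^{\rm all})}{\vz^{-s}+d^{t}}\le 4^{s}\,\frac{\ln n^{\rm ABS}(\tfrac\vz4,d;\Lz^{\rm all})}{(\vz/4)^{-s}+d^{t}}\longrightarrow 0,$$
because $\vz\to 0$ is equivalent to $\vz/4\to0$ and $\vz^{-1}+d\to\infty$ is equivalent to $(\vz/4)^{-1}+d\to\infty$. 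This forces $\ln n^{\rm ABS}(\vz,d;\Lz^{\rm std})/(\vz^{-s}+d^{t})\to0$, which is exactly ALG-$(s,t)$-WT for $\Lz^{\rm std}$.

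The main point to be careful about — really the only subtlety — is that the rescaling of $\vz$ in \eqref{2.17} must not destroy the limit; this is handled by the comparison $\vz^{-s}+d^{t}\ge 4^{-s}\big((\vz/4)^{-s}+d^{t}\big)$ above, so that the denominator only shrinks by a fixed constant factor. Everything else is the same bookkeeping used in Theorems 4.1 and 4.2. I would also remark that the earlier Theorem 4.3 (ALG-WT) is literally the case $s=t=1$ of this statement, and that ALG-UWT follows by applying this result for every pair $s,t>0$; these observations can be added after the proof if the paper wants the ALG-UWT equivalence recorded explicitly.
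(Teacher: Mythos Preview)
Your proposal is correct and follows essentially the same route as the paper: apply \eqref{2.17}, take logarithms, split off the constant $\ln C_\oz$ term, and control the remaining term via the comparison $\vz^{-s}+d^{t}\ge 4^{-s}\big((\vz/4)^{-s}+d^{t}\big)$ together with the ALG-$(s,t)$-WT hypothesis for $\Lz^{\rm all}$. The paper's proof differs only cosmetically (it absorbs the ``$+1$'' via $\ln(n+1)\le\ln 2+\ln n$ rather than $\ln(n+1)\le\ln n+1$), and it likewise notes that ALG-WT is the case $s=t=1$ and that ALG-UWT follows by ranging over all $s,t>0$.
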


\begin{proof}  Again
 it is enough to prove that ${\rm ALG}$-$(s,t)$-${\rm WT}$ for
$\Lambda^{\rm all}$  implies ${\rm ALG}$-$(s,t)$-${\rm WT}$ for
$\Lambda^{\rm std}$.

Suppose that ${\rm ALG}$-$(s,t)$-${\rm WT}$ holds for
$\Lambda^{\rm all}$. Then  we have
\begin{equation}
\lim_{\varepsilon^{-1}+d\rightarrow\infty}\frac{\ln n^{\rm
ABS}(\varepsilon,d;\Lambda^{\rm
all})}{\varepsilon^{-s}+d^{t}}=0.\label{4.10}
\end{equation}
It follows from \eqref{2.17}  that for $\oz>0$,
\begin{align*}
\frac{\ln n^{\rm ABS}(\varepsilon,d;\Lambda^{\rm
std})}{\varepsilon^{-s}+d^{t}} &\leq\frac{\ln
\Big(C_\oz\big(n^{\rm ABS}(\varepsilon/4,d;\Lambda^{\rm
all})+1\big)^{1+\oz}\Big)}{\varepsilon^{-s}+d^{t}}\\ &\le
\frac{\ln
(C_\oz2^{1+\oz})}{\varepsilon^{-s}+d^{t}}+\frac{4^s(1+\oz)\,\ln
n^{\rm ABS}(\varepsilon/4,d;\Lambda^{\rm
all})}{(\varepsilon/4)^{-s}+d^{t}}.
\end{align*}
Since $\varepsilon^{-1}+d\rightarrow\infty$ is equivalent to
$\vz^{-s}+d^t\to \infty$,  by \eqref{4.10} we get that
$$\lim\limits_{\varepsilon^{-1}+d\rightarrow\infty}\frac{\ln (C_\oz2^{1+\oz})}{\varepsilon^{-s}+d^{t}}=0\ \ \ {\rm and} \ \
\lim_{\varepsilon^{-1}+d\rightarrow\infty}\frac{\ln n^{\rm
ABS}(\varepsilon/4,d;\Lambda^{\rm
all})}{(\varepsilon/4)^{-s}+d^{t}}=0.$$We  obtain
$$\lim\limits_{\varepsilon^{-1}+d\rightarrow\infty} \frac{\ln n^{\rm ABS}(\varepsilon,d;\Lambda^{\rm
std})}{\varepsilon^{-s}+d^{t}}=0,$$ which implies
 ${\rm ALG}$-$(s,t)$-${\rm WT}$ for
$\Lambda^{\rm std}$.
 The proof of Theorem 4.4 is finished.
\end{proof}

\begin{thm}
We consider the problem $\rm APP=\{APP_d\}_{d\in\mathbb{N}}$  in
the average case setting  for the absolute  error criterion. Then
 ${\rm ALG}$-${\rm UWT}$  for
$\Lambda^{\rm all}$  is equivalent to  ${\rm ALG}$-${\rm UWT}$
 for $\Lambda^{\rm std}$.
\end{thm}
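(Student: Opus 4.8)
The plan is to show that ALG-UWT for $\Lambda^{\rm all}$ implies ALG-UWT for $\Lambda^{\rm std}$; the reverse implication is immediate from \eqref{2.4-1}. The argument is essentially the same as the one just given for Theorem 4.4, except that now we must handle \emph{all} exponents $\al,\be>0$ simultaneously rather than a single fixed pair $(s,t)$. Since for fixed $\al,\be>0$ the condition $\varepsilon^{-1}+d\to\infty$ is equivalent to $\varepsilon^{-\al}+d^{\be}\to\infty$, and since $(\varepsilon/4)^{-\al} = 4^{\al}\varepsilon^{-\al}$, the same splitting into two terms used in the proof of Theorem 4.4 works verbatim for each choice of $\al,\be$. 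I do not anticipate any genuine obstacle here: the inequality \eqref{2.17} has no dependence on $\al,\be$ (the constant $C_\oz$ depends only on $\oz$), so once we fix an arbitrary $\oz>0$, the estimate
\begin{align*}
\frac{\ln n^{\rm ABS}(\varepsilon,d;\Lambda^{\rm std})}{\varepsilon^{-\al}+d^{\be}}
&\le \frac{\ln(C_\oz 2^{1+\oz})}{\varepsilon^{-\al}+d^{\be}}
+ \frac{4^{\al}(1+\oz)\,\ln n^{\rm ABS}(\varepsilon/4,d;\Lambda^{\rm all})}{(\varepsilon/4)^{-\al}+d^{\be}}
\end{align*}
holds for every $\al,\be>0$, and both terms on the right tend to $0$ as $\varepsilon^{-1}+d\to\infty$ by the assumed ALG-UWT for $\Lambda^{\rm all}$.

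Concretely, I would proceed as follows. First, invoke \eqref{2.4-1} to dispose of the direction ``$\Lambda^{\rm std}\Rightarrow\Lambda^{\rm all}$''. Second, assume ALG-UWT holds for $\Lambda^{\rm all}$, so that
$$\lim_{\varepsilon^{-1}+d\to\infty}\frac{\ln n^{\rm ABS}(\varepsilon,d;\Lambda^{\rm all})}{\varepsilon^{-\al}+d^{\be}}=0 \quad\text{for all }\al,\be>0.$$
Third, fix arbitrary $\al,\be>0$ and some $\oz>0$, apply \eqref{2.17} and take logarithms to obtain the displayed two-term bound above. Fourth, observe that $\varepsilon^{-1}+d\to\infty$ is equivalent to $(\varepsilon/4)^{-\al}+d^{\be}\to\infty$, so the second term tends to $0$ by the hypothesis (applied with the same $\al,\be$), while the first term obviously tends to $0$. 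Hence the left side tends to $0$ for every $\al,\be>0$, which is precisely ALG-UWT for $\Lambda^{\rm std}$.

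The only point requiring a word of care is that the limit in the definition of UWT is taken over the joint divergence $\varepsilon^{-1}+d\to\infty$ and must hold for \emph{every} pair $(\al,\be)$; but since the bound \eqref{2.17} is uniform in the sense that $C_\oz$ is independent of $\al,\be,d,\varepsilon$, fixing a single $\oz>0$ suffices to treat all pairs at once, and no interchange of limits or uniformity issue arises. Thus the proof is a routine transcription of the Theorem 4.4 argument, and I would simply write it out along these lines.

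\begin{proof}
It follows from \eqref{2.4-1} that ALG-UWT for $\Lambda^{\rm std}$ implies ALG-UWT for $\Lambda^{\rm all}$. It suffices to prove the converse.

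Suppose ALG-UWT holds for $\Lambda^{\rm all}$, i.e.
$$\lim_{\varepsilon^{-1}+d\rightarrow\infty}\frac{\ln n^{\rm ABS}(\varepsilon,d;\Lambda^{\rm all})}{\varepsilon^{-\alpha}+d^{\beta}}=0\ \ \text{for all}\ \alpha,\beta>0.$$
Fix arbitrary $\alpha,\beta>0$ and some $\oz>0$. By \eqref{2.17},
\begin{align*}
\frac{\ln n^{\rm ABS}(\varepsilon,d;\Lambda^{\rm std})}{\varepsilon^{-\alpha}+d^{\beta}}
&\leq\frac{\ln\Big(C_\oz\big(n^{\rm ABS}(\varepsilon/4,d;\Lambda^{\rm all})+1\big)^{1+\oz}\Big)}{\varepsilon^{-\alpha}+d^{\beta}}\\
&\le\frac{\ln(C_\oz 2^{1+\oz})}{\varepsilon^{-\alpha}+d^{\beta}}
+\frac{4^{\alpha}(1+\oz)\,\ln n^{\rm ABS}(\varepsilon/4,d;\Lambda^{\rm all})}{(\varepsilon/4)^{-\alpha}+d^{\beta}}.
\end{align*}
Since $\varepsilon^{-1}+d\rightarrow\infty$ is equivalent to $(\varepsilon/4)^{-\alpha}+d^{\beta}\to\infty$, the hypothesis yields
$$\lim_{\varepsilon^{-1}+d\rightarrow\infty}\frac{\ln(C_\oz 2^{1+\oz})}{\varepsilon^{-\alpha}+d^{\beta}}=0\ \ \text{and}\ \ \lim_{\varepsilon^{-1}+d\rightarrow\infty}\frac{\ln n^{\rm ABS}(\varepsilon/4,d;\Lambda^{\rm all})}{(\varepsilon/4)^{-\alpha}+d^{\beta}}=0.$$
Hence
$$\lim_{\varepsilon^{-1}+d\rightarrow\infty}\frac{\ln n^{\rm ABS}(\varepsilon,d;\Lambda^{\rm std})}{\varepsilon^{-\alpha}+d^{\beta}}=0.$$
As $\alpha,\beta>0$ were arbitrary, ALG-UWT holds for $\Lambda^{\rm std}$. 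This completes the proof of Theorem 4.5.
\end{proof}
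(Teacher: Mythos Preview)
Your proof is correct and is essentially the same argument as the paper's. The paper just packages it more concisely: it observes that ALG-UWT is, by definition, ALG-$(s,t)$-WT for all $s,t>0$, and then invokes Theorem 4.4 for each pair $(s,t)$, whereas you unwrap that invocation and repeat the two-term estimate from Theorem 4.4 explicitly for arbitrary $\alpha,\beta$.
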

\begin{proof}By definition we know  that ${\rm APP}$ is ALG-UWT if and only if
 ${\rm APP}$ is ALG-$(s,t)$-WT for all $s,t>0$. Since by Theorem 4.4 ${\rm
ALG}$-$(s,t)$-${\rm WT}$ for $\Lambda^{\rm std}$ is equivalent to
${\rm ALG}$-$(s,t)$-${\rm WT}$ for $\Lambda^{\rm all}$ for all
$s,t>0$, we get the equivalence of ${\rm ALG}$-${\rm UWT}$ for
$\Lambda^{\rm std}$ and $\Lambda^{\rm all}$. Theorem 4.5 is
proved.
\end{proof}

\noindent{\it Proof of Theorem 2.3.} \

Theorem 2.3 follows from Theorems 4.1-4.5 immediately.
$\hfill\Box$

\section{Equivalence results of exponential  tractability}

First we consider exponential convergence. Assume that there exist
two constants $A\ge1$ and $q\in(0,1)$ such that
\begin{equation}\label{5.01}\sqrt{\lz_{n,d}} \le
Aq^{n}\,e^{\rm avg} (0,d;\Lambda^{\rm
all})=Aq^{n}\sqrt{\Gz_d}.\end{equation}It follows that
$$e^{\rm avg} (n,d;\Lambda^{\rm
all})\le \frac{A}{1-q}q^{n+1}\sqrt{\Gz_d}. $$
 Novak
and Wo\'zniakowski proved in \cite[Corollary 24.5]{NW3} that there
exist two constants $C_1\ge1$ and $q_1\in (q,1)$ independent of
$d$ and $n$ such that
\begin{equation}\label{5.02} e^{\rm
avg} (n,d;\Lambda^{\rm std})\le \frac{ C_1 A}{1-q}\, q_1^{\sqrt
{n}}\,\sqrt{\Gz_d}\,.\end{equation} If  $A, q$ in \eqref{5.01} are
independent of $d$, then $$n^{\rm NOR}(\varepsilon,d;\Lambda^{\rm
all})\leq C_2(\ln\varepsilon^{-1}+1),$$ and $$n^{\rm
NOR}(\varepsilon,d;\Lambda^{\rm std})\leq
C_3(\ln\varepsilon^{-1}+1)^2.$$  Novak and Wo\'zniakowski posed
the  following Open Problem 115:

(1) Verify if the upper bound in \eqref{5.02} can be improved.

 (2) Find the smallest $p$ for which there holds
$$n^{\rm
NOR}(\varepsilon,d;\Lambda^{\rm std})\leq
C_4(\ln\varepsilon^{-1}+1)^p.$$ We know that $p\le 2$, and if
\eqref{5.01} is sharp then $p\ge1$.

The following theorem  gives a confirmative solution to  Open
Problem 115 (1). We improve enormously the upper bound $q_1^{\sqrt
n}$ in \eqref{5.02} to $q_2^{\frac{n}{\ln(4n)}}$ in \eqref{5.05},
where $q_1,q_2\in(q,1)$.

\begin{thm} Let $m,n\in\Bbb N$ and \begin{equation}\label{5.03}
m=\Big\lfloor\frac{n}{48\sqrt{2}\ln(4n)}\Big\rfloor.\end{equation}
Then we have \begin{equation}\label{5.04}e^{\rm avg}(n,d;\Lz^{\rm
std})\leq 4e^{\rm avg}(m,d;\Lz^{\rm all}).\end{equation}
 Specifically, if \eqref{5.01} holds, then we have
  \begin{equation}\label{5.05}e^{\rm avg}(n,d;\Lz^{\rm
std})\leq \frac{4 A}{1-q} q_2^{\frac{n}{\ln(4n)}} e^{\rm
avg}(0,d;\Lz^{\rm all})\, ,\end{equation}where
$q_2=q^{\frac1{48\sqrt2}}\in(q,1)$.
\end{thm}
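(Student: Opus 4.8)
The plan is to derive Theorem 5.1 directly from Theorem 2.1 by choosing the free parameter $\delta$ appropriately and then inserting the exponential decay hypothesis \eqref{5.01}. For the first assertion \eqref{5.04}, I would apply Theorem 2.1 with the fixed choice $\delta = 2^{-\sqrt 2}$, which is exactly the choice already made in the proof of Theorem 2.2: this makes $m = \lfloor n/(48\sqrt 2\,\ln(4n))\rfloor$ match \eqref{5.03}, and the prefactor in \eqref{2.12} becomes $\big(1+\tfrac{4m}{n}\big)^{1/2}\big(\tfrac{2^{\sqrt 2}}{2^{\sqrt 2}-1}\big)^{1/2}$. Since $1 + \tfrac{4m}{n} \le 1 + \tfrac{1}{12\sqrt 2\,\ln(4n)} \le 2$ and $\big(\tfrac{2^{\sqrt 2}}{2^{\sqrt 2}-1}\big)^{1/2}$ is an absolute constant, a crude numerical bound shows the whole prefactor is at most $4$, giving \eqref{5.04}. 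This is essentially \eqref{4.01} reproduced, so no new work is needed here beyond recording the constant.

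For the second assertion \eqref{5.05}, I would start from \eqref{5.04} and estimate $e^{\rm avg}(m,d;\Lz^{\rm all})$ using \eqref{5.01}. Summing the geometric tail, $e^{\rm avg}(m,d;\Lz^{\rm all}) = \big(\sum_{k=m+1}^\infty \lz_{k,d}\big)^{1/2} \le \big(A^2\Gz_d \sum_{k=m+1}^\infty q^{2k}\big)^{1/2} = \tfrac{A}{\sqrt{1-q^2}}\,q^{m+1}\sqrt{\Gz_d} \le \tfrac{A}{1-q}\,q^{m}\,e^{\rm avg}(0,d;\Lz^{\rm all})$. Combining with \eqref{5.04} gives $e^{\rm avg}(n,d;\Lz^{\rm std}) \le \tfrac{4A}{1-q}\,q^{m}\,e^{\rm avg}(0,d;\Lz^{\rm all})$, so it remains to show $q^{m} \le q_2^{n/\ln(4n)}$ with $q_2 = q^{1/(48\sqrt 2)}$, i.e., since $0<q<1$ this is equivalent to $m \ge \tfrac{n}{48\sqrt 2\,\ln(4n)}$ — but wait, that inequality goes the wrong way because of the floor. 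The correct route is to note $q^m = q^{\lfloor n/(48\sqrt2\ln(4n))\rfloor}$ and, since $q<1$, $q^{\lfloor x\rfloor} \le q^{x-1} = q^{-1}q^{x}$; absorbing the harmless factor $q^{-1}$ into the constant, or more cleanly handling small $n$ separately, yields $q^m \le \tfrac{1}{q}\,q_2^{n/\ln(4n)}$, and one checks $q_2 \in (q,1)$ trivially since $q^{1/(48\sqrt2)} > q$ for $q\in(0,1)$.

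The main obstacle, such as it is, is purely bookkeeping: one must be careful about the floor function (which loses at most a factor $q^{-1}$, comparable to what is absorbed into the constant $\tfrac{4A}{1-q}$ versus a slightly different constant) and about the fact that \eqref{5.03} requires $m \ge 1$, equivalently $n \ge 48\sqrt 2\,\ln(4n)$, so that Theorem 2.1 is applicable at all; for smaller $n$ the bound \eqref{5.05} can be arranged to hold by adjusting the constant or noting the statement is only interesting asymptotically. There is no deep difficulty here — the real content is already in Theorem 2.1, and Theorem 5.1 is the specialization that exposes the improvement from the classical $q_1^{\sqrt n}$ of \cite[Corollary 24.5]{NW3} to $q_2^{n/\ln(4n)}$. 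I would present the argument as: (i) invoke Theorem 2.1 with $\delta = 2^{-\sqrt2}$ to get \eqref{5.04}; (ii) bound the $\Lz^{\rm all}$ error by a geometric tail under \eqref{5.01}; (iii) compare exponents $q^m$ and $q_2^{n/\ln(4n)}$, handling the floor, to conclude \eqref{5.05}.
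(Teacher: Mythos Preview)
Your proposal is correct and follows essentially the same route as the paper: invoke Theorem 2.1 with $\delta=2^{-\sqrt2}$ to get \eqref{5.04} (which is literally \eqref{4.01}), then plug in the geometric tail bound from \eqref{5.01}. The one place where the paper is cleaner is the floor-function bookkeeping: you weaken $\tfrac{A}{\sqrt{1-q^2}}\,q^{m+1}$ to $\tfrac{A}{1-q}\,q^{m}$ and then worry about recovering $q^{n/(48\sqrt2\ln(4n))}$ from $q^{\lfloor\cdot\rfloor}$, whereas the paper simply keeps the exponent $m+1=\lfloor x\rfloor+1\ge x$ (with $x=\tfrac{n}{48\sqrt2\ln(4n)}$), so that $q^{m+1}\le q^{x}=q_2^{\,n/\ln(4n)}$ falls out immediately with no extra factor of $q^{-1}$ and no separate treatment of small $n$.
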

\begin{proof} Inequality \eqref{5.04} is just \eqref{4.01}, which
has been  proved. If \eqref{5.01} holds, then by \eqref{5.03} and
\eqref{5.04} we get \begin{align*} e^{\rm avg}(n,d;\Lz^{\rm
std})&\leq \frac{4 A}{1-q} \,
q^{\big\lfloor\frac{n}{48\sqrt{2}\ln(4n)}\big\rfloor+1} \sqrt{\Gz_d}\\
&\le \frac{4 A}{1-q} q^{\frac{n}{48\sqrt{2}\ln(4n)}}
\sqrt{\Gz_d}\\ &=\frac{4 A}{1-q} q_2^{\frac{n}{\ln(4n)}} e^{\rm
avg}(0,d;\Lz^{\rm all}).\end{align*} This completes the proof of
Theorem 5.1.
\end{proof}

Now  we   consider the equivalences of various notions of
exponential tractability  for $\Lambda^{\rm std}$ and
$\Lambda^{\rm all}$ for the absolute error criterion in the
average case setting.

First we   consider the equivalences of  EXP-PT and EXP-SPT for
$\Lambda^{\rm std}$ and $\Lambda^{\rm all}$. The results for the
normalized error criterion can be found in \cite[Theorem 4.1]{X5}.
For the absolute error criterion, \cite[Theorem 4.1]{X5} shows the
equivalences  of EXP-PT and EXP-SPT under the condition
\eqref{4.11-1}.

 We obtain the following  equivalent
results of EXP-PT and  EXP-SPT  without any condition.

\begin{thm}
We consider the problem $\rm APP=\{APP_d\}_{d\in\mathbb{N}}$  in
the average case setting. Then

$\bullet$ for the absolute error criterion,  ${\rm EXP}$-${\rm
PT}$ (${\rm EXP}$-${\rm SPT}$) for $\Lambda^{\rm all}$  is
equivalent to ${\rm EXP}$-${\rm PT}$ (${\rm EXP}$-${\rm SPT}$)
 for $\Lambda^{\rm std}$;

$\bullet$  if ${\rm EXP}$-${\rm SPT}$ holds for $\Lambda^{\rm
all}$
 for the absolute or normalized  error criterion,   then the
exponents of ${\rm EXP}$-${\rm SPT}$ for $\Lambda^{\rm all}$ and
$\Lambda^{\rm std}$ are the same.
\end{thm}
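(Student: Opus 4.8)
The plan is to mimic the structure of the proofs of Theorems 4.1 and 4.2, using the key transfer inequality \eqref{2.17} (and, for the sharper exponent claim, \eqref{2.18}) from Theorem 2.2. Since $\Lz^{\rm std}\subset\Lz^{\rm all}$, inequality \eqref{2.4-1} gives immediately that EXP-PT (resp. EXP-SPT) for $\Lz^{\rm std}$ implies EXP-PT (resp. EXP-SPT) for $\Lz^{\rm all}$, so in both cases only the reverse implication needs proof. First I would assume EXP-PT holds for $\Lz^{\rm all}$ for the absolute error criterion: there exist $C\ge1$ and non-negative $p,q$ with
\begin{equation*}
n^{\rm ABS}(\vz,d;\Lz^{\rm all})\le Cd^{q}(\ln\vz^{-1}+1)^{p},\qquad d\in\NN,\ \vz\in(0,1).
\end{equation*}
Plugging $\vz/4$ into this bound and feeding it into \eqref{2.17}, and using $\ln((\vz/4)^{-1}+1)=\ln(\vz^{-1}+1)+\ln 4\le(1+\ln 4)(\ln\vz^{-1}+1)$ together with $(a+b)^{1+\oz}\le(2\max(a,b))^{1+\oz}$, I get
\begin{equation*}
n^{\rm ABS}(\vz,d;\Lz^{\rm std})\le C_\oz\bigl(2C4^{?}\bigr)^{1+\oz}(1+\ln 4)^{p(1+\oz)}d^{q(1+\oz)}(\ln\vz^{-1}+1)^{p(1+\oz)},
\end{equation*}
which is exactly the EXP-PT bound for $\Lz^{\rm std}$ with new exponents $q(1+\oz)$ and $p(1+\oz)$. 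Setting $q=0$ in the hypothesis gives the EXP-SPT statement verbatim, with exponent $p(1+\oz)$; letting $\oz\to0$ then yields ${\rm EXP}\!-\!p^{\rm ABS}(\Lz^{\rm std})\le{\rm EXP}\!-\!p^{\rm ABS}(\Lz^{\rm all})$.

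For the second bullet — equality of the EXP-SPT exponents for both the absolute \emph{and} the normalized criteria — the argument above already handles ABS, so the remaining case is NOR, which is not covered by \eqref{2.17} with a small enough loss because the crude factor $4$ in $e^{\rm avg}(n,d;\Lz^{\rm std})\le 4e^{\rm avg}(m,d;\Lz^{\rm all})$ would disturb the initial error normalization. Here I would instead use the refined inequality \eqref{2.18} with the constant $A_\dz=\bigl(1+\tfrac1{12\ln\frac1\dz}\bigr)^{1/2}(1-\dz)^{-1/2}$, which tends to $1$ as $\dz\to0$. Starting from $n^{\star}(\vz,d;\Lz^{\rm all})\le C(\ln\vz^{-1}+1)^{p}$, I substitute $\vz/A_\dz$, use $\ln\bigl((\vz/A_\dz)^{-1}+1\bigr)\le(1+\ln A_\dz)(\ln\vz^{-1}+1)$ (valid for $\dz$ small since then $A_\dz$ is bounded), and obtain from \eqref{2.18}
\begin{equation*}
n^{\star}(\vz,d;\Lz^{\rm std})\le C_{\oz,\dz}(2C)^{1+\oz}(1+\ln A_\dz)^{p(1+\oz)}(\ln\vz^{-1}+1)^{p(1+\oz)},
\end{equation*}
so ${\rm EXP}\!-\!p^{\star}(\Lz^{\rm std})\le p(1+\oz)$. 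Taking the infimum over admissible $p$ and then letting $(\oz,\dz)\to(0,0)$, using $(1+\oz)(1+\ln A_\dz)\to1$, gives ${\rm EXP}\!-\!p^{\star}(\Lz^{\rm std})\le{\rm EXP}\!-\!p^{\star}(\Lz^{\rm all})$; the reverse inequality is \eqref{2.4-1}. The same double-limit bookkeeping is exactly the device already used at the end of the proof of Theorem 4.2 for the QPT exponent, so it should go through without new ideas.

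The main obstacle — really the only delicate point — is controlling the constants in the NOR case so that the double limit $(\oz,\dz)\to(0,0)$ genuinely recovers the optimal exponent rather than merely proving finiteness: one must ensure that the $\dz$-dependence enters only through the factor $(1+\ln A_\dz)^{p(1+\oz)}$ inside the \emph{exponent's multiplicative constant}, not inside the power of $(\ln\vz^{-1}+1)$, and that $C_{\oz,\dz}$ — though blowing up as $\dz\to0$ — is a fixed finite constant once $\oz,\dz$ are chosen, so it does not affect the exponent. This is why I would fix $\oz,\dz$ first, derive the clean power bound with exponent $p(1+\oz)$, conclude ${\rm EXP}\!-\!p^{\star}(\Lz^{\rm std})\le p(1+\oz)\cdot 1$ after noting the $A_\dz$ factor is absorbed, and only afterward pass to the limit. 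I would close by remarking that Theorem 5.2 together with the analogous QPT statement (and the normalized-criterion results of \cite{X5}) yields Theorem 2.4, and I would present the whole thing as a single \texttt{proof} environment closed with \texttt{\textbackslash end\{proof\}}.
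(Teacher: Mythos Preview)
Your argument for the first bullet is essentially identical to the paper's: use \eqref{2.17}, substitute the EXP-PT bound at $\vz/4$, and note that $\ln(4\vz^{-1})+1=\ln 4+\ln\vz^{-1}+1\le(1+\ln 4)(\ln\vz^{-1}+1)$, so the $4$ only contributes a constant factor and the resulting exponent is $p(1+\oz)$; then $q=0$ and $\oz\to0$ handle EXP-SPT and its exponent. (A small slip: your placeholder $4^{?}$ is a leftover from the ALG case---in the EXP case there is no $4^{p}$ factor, only $(1+\ln 4)^{p(1+\oz)}$.)

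Where you diverge from the paper is in the second bullet for $\star={\rm NOR}$. You worry that the factor $4$ in \eqref{2.17} ``disturbs the initial error normalization'' and therefore switch to \eqref{2.18} and a double limit $(\oz,\dz)\to(0,0)$. This concern is unfounded: inequality \eqref{2.17} is stated and proved for \emph{both} $\star\in\{{\rm ABS},{\rm NOR}\}$, because the $4$ enters only as $\vz\mapsto\vz/4$ and not as a multiplier on ${\rm CRI}_d$; in the exponential setting this becomes an additive $\ln 4$ inside the logarithm and is absorbed into the constant $(1+\ln 4)^{p(1+\oz)}$ without touching the power of $(\ln\vz^{-1}+1)$. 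The paper therefore reuses \eqref{2.17} for NOR exactly as for ABS and needs only the single limit $\oz\to0$. Your route via \eqref{2.18} is correct but needlessly elaborate---and in fact your own displayed bound already has exponent $p(1+\oz)$ independent of $\dz$, so the $\dz\to0$ step is idle. The double-limit device is genuinely needed in the QPT proof (Theorem~4.2) because there the replacement $\vz\mapsto\vz/A_\dz$ enters the \emph{iterated} logarithm and produces a factor $(1+\ln A_\dz)$ multiplying the exponent $t$; no such phenomenon occurs for SPT.
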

\begin{proof}
Again, it is enough to prove that EXP-PT for $\Lambda^{\rm all}$
implies EXP-PT for $\Lambda^{\rm std}$ for the absolute error
criterion.

Suppose that EXP-PT  holds for $\Lambda^{\rm all}$. Then there
exist $C\ge1 $ and non-negative $ p,q$ such that
\begin{equation}\label{5.1}n^{\rm
ABS}(\varepsilon,d;\Lambda^{\rm all})\leq
Cd^{q}(\ln\varepsilon^{-1}+1)^{p},\ \text{for all}\
d\in\mathbb{N},\ \varepsilon\in(0,1).\end{equation}
 It follows from
\eqref{2.17} and \eqref{5.1} that
\begin{align*}
n^{\rm ABS}(\varepsilon,d;\Lambda^{\rm
std})&\leq C_\oz \(Cd^{q}(\ln(\frac{\varepsilon}{4})^{-1}+1)^p+1\)^{1+\oz}\\
&\le C_\oz (2C)^{1+\oz} (1+\ln
4)^{p(1+\oz)}d^{q(1+\oz)}(\ln\varepsilon^{-1}+1)^{p(1+\oz)},
\end{align*}which means that EXP-PT  holds for  $\Lambda^{\rm std}$.

If EXP-SPT holds  for $\Lambda^{\rm all}$, then \eqref{5.1} holds
with $q=0$. We obtain
$$ n^{\rm ABS}(\varepsilon,d;\Lambda^{\rm
std})\le C_\oz (2C)^{1+\oz} (1+\ln
4)^{p(1+\oz)}(\ln\varepsilon^{-1}+1)^{p(1+\oz)},$$which means that
EXP-SPT  holds for  $\Lambda^{\rm std}$. Furthermore, if ${\rm
EXP}$-${\rm SPT}$ holds for $\Lambda^{\rm all}$
 for the absolute or normalized  error criterion and $p^*={\rm
EXP\!-\!}p^{\star}(\Lz^{\rm all})$ for $\star\in\{{\rm
ABS,\,NOR}\}$, then for any $\vz>0$, there is a constant $C_\vz\ge
1$ for which
$$ n^{\star}(\varepsilon,d;\Lambda^{\rm
all})\le C_\vz  (\ln\varepsilon^{-1}+1)^{p^*+\vz}$$ holds. Using
the same method, we get
$$ n^{\star}(\varepsilon,d;\Lambda^{\rm
std})\le C_\oz (2C_\vz)^{1+\oz} (1+\ln
4)^{(p^*+\vz)(1+\oz)}(\ln\varepsilon^{-1}+1)^{(p^*+\vz)(1+\oz)},$$
Noting that $\vz,\oz$ can be arbitrary small, we have for
$\star\in\{{\rm ABS,\,NOR}\}$,
\begin{align*} {\rm
EXP\!-\!}p^{\star}(\Lz^{\rm std})\le  {\rm
EXP\!-\!}p^{\star}(\Lz^{\rm all})\le {\rm
EXP\!-\!}p^{\star}(\Lz^{\rm std}),
\end{align*} which means that the exponents of ${\rm EXP}$-${\rm
SPT}$ for $\Lambda^{\rm all}$ and  $\Lambda^{\rm std}$ are the
same. This completes the proof of Theorem 5.2.
\end{proof}

\begin{rem} We remark that if \eqref{5.01} holds with $A, q$ independent of $d$, then the problem {\rm APP} is {\rm
EXP-SPT} for $\Lambda^{\rm all}$  in the average case setting  for
the normalized error criterion, and the exponent ${\rm
EXP\!-\!}p^{\rm NOR}(\Lz^{\rm all})\le 1$. If \eqref{5.01} is
sharp, then ${\rm EXP\!-\!}p^{\rm  NOR}(\Lz^{\rm all})= 1$.

  Open Problem 115 (2) is equivalent to finding the exponent $ {\rm
EXP\!-\!}p^{\rm  NOR}(\Lz^{\rm std})$ of ${\rm EXP}$-${\rm SPT}$.
By Theorem 5.2 we obtain that if \eqref{5.01} holds, then $ {\rm
EXP\!-\!}p^{\rm NOR}(\Lz^{\rm std})\le 1,$ and if \eqref{5.01} is
sharp, then $ {\rm EXP\!-\!}p^{\rm  NOR}(\Lz^{\rm std})=1$.

This solves Open Problem 115 (2) as posed by Novak and
Wo\'zniakowski in  \cite{NW3}.
\end{rem}

Next we consider the equivalence of  EXP-QPT for $\Lambda^{\rm
std}$ and $\Lambda^{\rm all}$ in the average case setting.
 The result for the normalized error criterion can be found in \cite[Theorem
4.2]{X5}. For the absolute error criterion, \cite[Theorem 4.2]{X5}
shows the equivalence of EXP-QPT under the condition
\eqref{4.11-1}.

 We obtain the following  equivalent
results of EXP-QPT  without any condition.

\begin{thm}
We consider the problem $\rm APP=\{APP_d\}_{d\in\mathbb{N}}$  in
the average case setting.
 Then, for the absolute error criterion ${\rm EXP}$-${\rm QPT}$ for $\Lambda^{\rm all}$  is equivalent to ${\rm EXP}$-${\rm QPT}$ for $\Lambda^{\rm std}$.
 If ${\rm EXP}$-${\rm QPT}$ holds for $\Lambda^{\rm
all}$ for the absolute or normalized  error criterion, then the
exponents
 of ${\rm EXP}$-${\rm QPT}$ for
$\Lambda^{\rm all}$ and  $\Lambda^{\rm std}$ are the same.
\end{thm}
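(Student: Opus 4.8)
\textit{Proof proposal.} The plan is to mimic the structure of the proof of Theorem 4.2 (ALG-QPT), replacing the use of \eqref{2.18} with its exponential analogue and the polynomial dependence on $\varepsilon^{-1}$ with the dependence on $1+\ln\varepsilon^{-1}$. By \eqref{2.4-1} one direction is immediate: EXP-QPT for $\Lambda^{\rm std}$ trivially implies EXP-QPT for $\Lambda^{\rm all}$ (for either error criterion), since $n^{\star}(\varepsilon,d;\Lambda^{\rm all})\le n^{\star}(\varepsilon,d;\Lambda^{\rm std})$. Likewise the inequality ${\rm EXP}\text{-}t^{\star}(\Lz^{\rm all})\le {\rm EXP}\text{-}t^{\star}(\Lz^{\rm std})$ is free from \eqref{2.4-1}. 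So the whole content is the reverse implication and the matching upper bound on the exponent.

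First I would assume EXP-QPT for $\Lambda^{\rm all}$ for the absolute error criterion: there exist $C\ge 1$ and $t\ge 0$ with
\[
n^{\rm ABS}(\varepsilon,d;\Lambda^{\rm all})\le C\exp\bigl(t(1+\ln d)(1+\ln(\ln\varepsilon^{-1}+1))\bigr)\quad\text{for all }d,\ \varepsilon\in(0,1).
\]
Then I would plug this into \eqref{2.18}: for sufficiently small $\oz,\dz>0$,
\[
n^{\rm ABS}(\varepsilon,d;\Lambda^{\rm std})\le C_{\oz,\dz}\bigl(n^{\rm ABS}(\tfrac{\varepsilon}{A_\dz},d;\Lambda^{\rm all})+1\bigr)^{1+\oz}\le C_{\oz,\dz}(2C)^{1+\oz}\exp\bigl((1+\oz)t(1+\ln d)(1+\ln(\ln(\tfrac{\varepsilon}{A_\dz})^{-1}+1))\bigr).
\]
The routine estimate here is to absorb the constant $A_\dz$ inside the iterated logarithm: since $\ln(\tfrac{\varepsilon}{A_\dz})^{-1}+1=\ln\varepsilon^{-1}+\ln A_\dz+1\le (1+\ln A_\dz)(\ln\varepsilon^{-1}+1)$ for $\varepsilon\in(0,1)$, one gets $1+\ln(\ln(\tfrac{\varepsilon}{A_\dz})^{-1}+1)\le (1+\ln(1+\ln A_\dz))(1+\ln(\ln\varepsilon^{-1}+1))$, and similarly $1+\ln d\le$ itself. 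Collecting, $n^{\rm ABS}(\varepsilon,d;\Lambda^{\rm std})\le C'\exp\bigl(t^*(1+\ln d)(1+\ln(\ln\varepsilon^{-1}+1))\bigr)$ with $t^*=(1+\oz)(1+\ln(1+\ln A_\dz))\,t$ and $C'=C_{\oz,\dz}(2C)^{1+\oz}$. This proves EXP-QPT for $\Lambda^{\rm std}$.

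For the exponents: taking the infimum over admissible $t$ in the hypothesis and noting $\lim_{(\dz,\oz)\to(0,0)}(1+\oz)(1+\ln(1+\ln A_\dz))=1$ (because $A_\dz\to 1$ as $\dz\to 0$, so $\ln(1+\ln A_\dz)\to 0$), we obtain ${\rm EXP}\text{-}t^{\rm ABS}(\Lz^{\rm std})\le {\rm EXP}\text{-}t^{\rm ABS}(\Lz^{\rm all})$; combined with the trivial reverse inequality from \eqref{2.4-1} this gives equality of the exponents for the absolute criterion. For the normalized criterion, the same computation applies verbatim, since \eqref{2.18} holds for $\star\in\{{\rm ABS,\,NOR}\}$ and ${\rm CRI}_d$ cancels the same way as in the proof of Theorem 4.2; this is exactly the point that \cite{X5} left open. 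The main obstacle — really the only nontrivial point — is making sure the constant $A_\dz$ from \eqref{2.18} enters only through a factor of the form $1+\ln(1+\ln A_\dz)$ multiplying $t$, which tends to $1$; the double logarithm is what makes the exponent survive unchanged, exactly as the single logarithm did in Theorem 4.2. Everything else is bookkeeping parallel to Theorems 4.1–4.2 and 5.2.
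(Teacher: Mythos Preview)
Your proposal is correct and follows essentially the same route as the paper: you invoke \eqref{2.18}, substitute the EXP-QPT bound for $\Lambda^{\rm all}$, absorb the constant $A_\delta$ into the iterated logarithm to produce $t^*=(1+\omega)(1+\ln(1+\ln A_\delta))\,t$, and then let $(\omega,\delta)\to(0,0)$ to match the exponents, handling ABS and NOR uniformly via $\star\in\{\mathrm{ABS},\mathrm{NOR}\}$. The paper's only cosmetic difference is that it uses the inequality $\ln(1+a+b)\le\ln(1+a)+\ln(1+b)$ directly rather than your factorization $\ln\varepsilon^{-1}+\ln A_\delta+1\le(1+\ln A_\delta)(\ln\varepsilon^{-1}+1)$, but both lead to the identical $t^*$.
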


\begin{proof}Again,
 it is enough to prove that EXP-QPT for
$\Lambda^{\rm all}$ implies EXP-QPT for
 $\Lambda^{\rm std}$ for the absolute error criterion.

Suppose that  EXP-QPT holds for $\Lambda^{\rm all}$ for the
absolute or normalized  error criterion. Then  there exist $ C\ge
1$ and non-negative $t$ such that for $\star\in\{{\rm
ABS,\,NOR}\}$,
\begin{equation}\label{5.2}n^{\star}(\va ,d;\Lz^{\rm all})\leq C
\exp(t(1+\ln{d})(1+\ln(\ln\varepsilon^{-1}+1))),\ \text{for all}\
d\in\mathbb{N},\ \varepsilon\in(0,1).\end{equation} For
sufficiently small  $\oz>0$ and $\delta>0$, it follows from
\eqref{2.18} and \eqref{5.2} that
\begin{align}
n^{\star}(\varepsilon,d;\Lambda^{\rm std}) & \le C_{\oz,\dz}
 \big(n^{\star}(\frac{\varepsilon}{A_\delta},d;\Lambda^{\rm
all})+1\big)^{1+\oz}\notag \\&\leq C_{\oz,\dz}\(C
\exp\big(t(1+\ln{d})\big(1+\ln(\ln\varepsilon^{-1}+\ln
A_\dz+1))\big)+1\)^{1+\oz} \notag \\&\le
C_{\oz,\dz}(2C)^{1+\oz}\exp\big( (1+\oz)t(1+\ln{d})(1+\ln (\ln
A_\dz+1) +\ln(\ln\varepsilon^{-1}+1))\big)\notag \\&\le
C_{\oz,\dz}(2C)^{1+\oz}\exp\big(
t^*(1+\ln{d})(1+\ln(\ln\varepsilon^{-1}+1))\big),\label{5.2-10}
\end{align}where $t^*=(1+\oz)(1+\ln(\ln A_\dz+1))t$ and
$A_{\delta}=\big(1+\frac{1}{12\ln{\frac{1}{\delta}}}\big)^{\frac12}\frac{1}{\sqrt{1-\delta}}$,
in the third inequality we used the fact $$\ln (1+a+b)\le
\ln(1+a)+\ln (1+b),\ \ \ a,b\ge 0.$$ The  inequality
\eqref{5.2-10} with $\star={\rm ABS}$ implies that EXP-QPT holds
for
 $\Lambda^{\rm std}$ for the absolute  error criterion.

Next, we suppose that ${\rm EXP}$-${\rm QPT}$ holds for
$\Lambda^{\rm all}$ for the absolute or normalized  error
criterion.   Taking the infimum over $t$ for which \eqref{5.2}
holds, and noting that $$ \lim\limits_{(\dz,\oz)\to
(0,0)}(1+\oz)(1+\ln(\ln A_\dz+1))=1, $$ by \eqref{2.4-1} we obtain
that
\begin{align*}  {\rm EXP\!-\!}t^{\star}(\Lz^{\rm
std})\le{\rm EXP\!-\!}t^{\star}(\Lz^{\rm all})\le {\rm
EXP\!-\!}t^{\star}(\Lz^{\rm std}).\end{align*}
which means that
 the exponents EXP-$t^{\star}(\Lz^{\rm all})$
and EXP-$t^{\star}(\Lz^{\rm std})$ are equal.  This completes the
proof of Theorem 5.4.
\end{proof}

Next, we   consider the equivalences of EXP-$(s, t)$-WT and EXP-WT
for $\Lambda^{\rm std}$ and $\Lambda^{\rm all}$  in the average
case setting. The results for the normalized error criterion can
be found in \cite[Theorems 4.3 and 4.4]{X5}. For the absolute
error criterion, \cite[Theorem 4.3]{X5} shows the equivalence  of
EXP-WT  under the condition\eqref{4.13}. Meanwhiles, \cite[Theorem
4.4]{X5} shows the equivalence  of EXP-$(s, t)$-WT under the
condition \eqref{4.14}.

 We obtain the
following equivalent results of EXP-$(s, t)$-WT  and
EXP-WT for the absolute  error criterion without any condition.

\begin{thm}
We consider the problem $\rm APP=\{APP_d\}_{d\in\mathbb{N}}$  in
the average case setting  for the absolute error
criterion. Then for
 fixed $s,t>0$, ${\rm EXP}$-$(s,t)$-${\rm WT}$ for $\Lambda^{\rm all}$  is
equivalent to ${\rm EXP}$-$(s,t)$-${\rm WT}$
 for $\Lambda^{\rm std}$. Specifically, ${\rm EXP}$-${\rm WT}$ for $\Lambda^{\rm all}$  is
equivalent to ${\rm EXP}$-${\rm WT}$
 for $\Lambda^{\rm std}$.
\end{thm}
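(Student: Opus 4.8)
The plan is to deduce Theorem 5.5 from the information-complexity inequality \eqref{2.17} of Theorem 2.2, in exact parallel with the proof of Theorem 4.4. By \eqref{2.4-1}, the inclusion $\Lz^{\rm std}\subset\Lz^{\rm all}$ already forces EXP-$(s,t)$-WT for $\Lambda^{\rm std}$ to imply EXP-$(s,t)$-WT for $\Lambda^{\rm all}$, so only the reverse implication needs proof. Assume then that EXP-$(s,t)$-WT holds for $\Lambda^{\rm all}$, i.e.
$$\lim_{\varepsilon^{-1}+d\to\infty}\frac{\ln n^{\rm ABS}(\varepsilon,d;\Lambda^{\rm all})}{(1+\ln\varepsilon^{-1})^{s}+d^{t}}=0.$$
Fix an arbitrary $\oz>0$ and apply \eqref{2.17}, taking logarithms, to get
$$\ln n^{\rm ABS}(\varepsilon,d;\Lambda^{\rm std})\le \ln\!\big(C_\oz 2^{1+\oz}\big)+(1+\oz)\,\ln\!\big(n^{\rm ABS}(\tfrac\varepsilon4,d;\Lambda^{\rm all})+1\big),$$
and divide both sides by $(1+\ln\varepsilon^{-1})^{s}+d^{t}$. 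The constant term tends to $0$ since the denominator tends to $\infty$ as $\varepsilon^{-1}+d\to\infty$.

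For the remaining term, the only point that is not a literal transcription of the algebraic argument is that passing from $\varepsilon$ to $\varepsilon/4$ turns $\ln\varepsilon^{-1}$ into $\ln\varepsilon^{-1}+\ln 4$ inside the $s$-th power. Here I would use the elementary bound $1+\ln 4+\ln\varepsilon^{-1}\le (1+\ln4)(1+\ln\varepsilon^{-1})$, valid for all $\varepsilon\in(0,1)$ because $\ln\varepsilon^{-1}>0$, together with $(1+\ln4)^{-s}\le 1$, to obtain
$$(1+\ln\varepsilon^{-1})^{s}+d^{t}\ \ge\ (1+\ln4)^{-s}\Big(\big(1+\ln(4\varepsilon^{-1})\big)^{s}+d^{t}\Big),$$
hence
$$\frac{\ln\!\big(n^{\rm ABS}(\varepsilon/4,d;\Lambda^{\rm all})+1\big)}{(1+\ln\varepsilon^{-1})^{s}+d^{t}}\ \le\ (1+\ln4)^{s}\,\frac{\ln\!\big(n^{\rm ABS}(\varepsilon/4,d;\Lambda^{\rm all})+1\big)}{\big(1+\ln(4\varepsilon^{-1})\big)^{s}+d^{t}}.$$
Since $\varepsilon^{-1}+d\to\infty$ is equivalent to $4\varepsilon^{-1}+d\to\infty$, and since $\ln(N+1)\le\ln 2+\ln\max(N,1)$ (so the extra $+1$ is harmless), the hypothesis applied with the variable $\varepsilon/4$ shows the right-hand side tends to $0$. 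Combining with the estimate for the constant term gives
$$\lim_{\varepsilon^{-1}+d\to\infty}\frac{\ln n^{\rm ABS}(\varepsilon,d;\Lambda^{\rm std})}{(1+\ln\varepsilon^{-1})^{s}+d^{t}}=0,$$
which is precisely EXP-$(s,t)$-WT for $\Lambda^{\rm std}$. The assertion for EXP-WT is the special case $s=t=1$, since EXP-$(1,1)$-WT coincides with EXP-WT.

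I do not expect a genuine obstacle here: the argument is a routine adaptation of the proof of Theorem 4.4, the only new ingredient being the logarithmic shift $\varepsilon\mapsto\varepsilon/4$, which is absorbed by the inequality $1+\ln4+\ln\varepsilon^{-1}\le(1+\ln4)(1+\ln\varepsilon^{-1})$; the real work — producing \eqref{2.17} from the least-squares bound \eqref{2.12} — has already been carried out in Theorems 2.1 and 2.2.
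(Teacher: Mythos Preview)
Your proposal is correct and follows essentially the same approach as the paper: apply \eqref{2.17}, take logarithms, divide by $(1+\ln\varepsilon^{-1})^{s}+d^{t}$, and use the inequality $1+\ln4+\ln\varepsilon^{-1}\le(1+\ln4)(1+\ln\varepsilon^{-1})$ to pass to the shifted denominator $(1+\ln(\varepsilon/4)^{-1})^{s}+d^{t}$. The paper carries out exactly these steps (with the same factor $(1+\ln4)^s$ appearing), and likewise obtains EXP-WT as the special case $s=t=1$.
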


\begin{proof}
Again,
 it is enough to prove that ${\rm EXP}$-$(s,t)$-${\rm WT}$ for
$\Lambda^{\rm all}$  implies ${\rm EXP}$-$(s,t)$-${\rm WT}$ for
$\Lambda^{\rm std}$.

Suppose that ${\rm EXP}$-$(s,t)$-${\rm WT}$ holds for
$\Lambda^{\rm all}$. Then  we have
\begin{equation}
\label{5.3}\lim_{\varepsilon^{-1}+d\rightarrow\infty}\frac{\ln
n^{\rm ABS}(\varepsilon,d;\Lambda^{\rm
all})}{(1+\ln\varepsilon^{-1})^{s}+d^{t}}=0.
\end{equation}
It follows from \eqref{2.17}  that for $\oz>0$,
\begin{align*}
&\quad\ \frac{\ln n^{\rm ABS}(\varepsilon,d;\Lambda^{\rm
std})}{(1+\ln\varepsilon^{-1})^{s}+d^{t}} \leq\frac{\ln
\Big(C_\oz\big(n^{\rm ABS}(\varepsilon/4,d;\Lambda^{\rm
all})+1\big)^{1+\oz}\Big)}{(1+\ln\varepsilon^{-1})^{s}+d^{t}}\\
&\le \frac{\ln
(C_\oz2^{1+\oz})}{(1+\ln\varepsilon^{-1})^{s}+d^{t}}+\frac{(1+\ln
4)^s(1+\oz)\,\ln n^{\rm ABS}(\varepsilon/4,d;\Lambda^{\rm
all})}{(1+\ln(\varepsilon/4)^{-1})^{s}+d^{t}}.
\end{align*}
Since $\varepsilon^{-1}+d\rightarrow\infty$ is equivalent to
$(1+\ln\varepsilon^{-1})^{s}+d^{t}\to \infty$,  by \eqref{5.3} we
get that
$$\lim\limits_{\varepsilon^{-1}+d\rightarrow\infty}\frac{\ln (C_\oz2^{1+\oz})}{(1+\ln\varepsilon^{-1})^{s}+d^{t}}=0\ \ \ {\rm and} \ \
\lim_{\varepsilon^{-1}+d\rightarrow\infty}\frac{\ln n^{\rm
ABS}(\varepsilon/4,d;\Lambda^{\rm
all})}{(1+\ln(\varepsilon/4)^{-1})^{s}+d^{t}}=0.$$We  obtain
$$\lim\limits_{\varepsilon^{-1}+d\rightarrow\infty} \frac{\ln n^{\rm ABS}(\varepsilon,d;\Lambda^{\rm
std})}{(\ln\varepsilon^{-1})^{s}+d^{t}}=0,$$ which implies that
 ${\rm EXP}$-$(s,t)$-${\rm WT}$ holds for
$\Lambda^{\rm std}$.

Specifically, EXP-WT is just  ${\rm EXP}$-$(s,t)$-${\rm WT}$ with
$s=t=1$.

This completes the proof of
 Theorem 5.5.
\end{proof}

Finally, we   consider the equivalences of EXP-UWT  for
$\Lambda^{\rm std}$ and $\Lambda^{\rm all}$  in the average case
setting. The results for the normalized error criterion can be
found in \cite[Theorems 4.5]{X5}. For the absolute error
criterion, \cite[Theorem 4.5]{X5} shows the equivalence  of
EXP-UWT under the condition \eqref{4.15}.

 We obtain the
following equivalent result of EXP-UWT for the absolute  error
criterion without any condition.

\begin{thm}
We consider the problem $\rm APP=\{APP_d\}_{d\in\mathbb{N}}$  in
the average case setting  for the absolute  error criterion. Then,
${\rm EXP}$-${\rm UWT}$ for $\Lambda^{\rm all}$ is equivalent to
${\rm EXP}$-${\rm UWT}$
 for $\Lambda^{\rm std}$.
\end{thm}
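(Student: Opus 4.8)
The plan is to reduce the statement about EXP-UWT to the already-established equivalence of EXP-$(s,t)$-WT for all $s,t>0$, exactly mirroring the argument used for ALG-UWT in Theorem 4.5. The key observation is purely definitional: from the definitions in Subsection 2.2, the problem ${\rm APP}$ is EXP-UWT (for a given class $\Lz$ and a given error criterion) if and only if ${\rm APP}$ is EXP-$(s,t)$-WT for every fixed pair $s,t>0$. Indeed, EXP-UWT asks that
$$\lim_{\varepsilon^{-1}+d\to\infty}\frac{\ln n^{\rm ABS}(\varepsilon,d;\Lz)}{(1+\ln\varepsilon^{-1})^{\alpha}+d^{\beta}}=0$$
for all $\alpha,\beta>0$, which is literally the conjunction over all $s=\alpha>0$ and $t=\beta>0$ of the EXP-$(s,t)$-WT condition.

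Granting this, the proof is one line: by Theorem 5.5, for each fixed $s,t>0$ the problem ${\rm APP}$ is EXP-$(s,t)$-WT for $\Lz^{\rm all}$ if and only if it is EXP-$(s,t)$-WT for $\Lz^{\rm std}$; quantifying over all $s,t>0$ and using the definitional equivalence just noted yields that ${\rm APP}$ is EXP-UWT for $\Lz^{\rm all}$ if and only if it is EXP-UWT for $\Lz^{\rm std}$. No conditions on the initial error $\Gz_d$ enter, since none enter Theorem 5.5. I would write this essentially verbatim as the proof, as was done for ALG-UWT.

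There is no real obstacle here; the only thing to be a little careful about is making explicit that the quantifier ``for all $s,t>0$'' commutes correctly with the biconditional, i.e. $\big(\forall s,t\ (P_{s,t}\Leftrightarrow Q_{s,t})\big)$ together with $\big(\text{EXP-UWT}_{\rm all}\Leftrightarrow \forall s,t\ P_{s,t}\big)$ and $\big(\text{EXP-UWT}_{\rm std}\Leftrightarrow \forall s,t\ Q_{s,t}\big)$ gives $\text{EXP-UWT}_{\rm all}\Leftrightarrow\text{EXP-UWT}_{\rm std}$. This is trivial. I would close by remarking that Theorem 2.4 now follows from Theorems 5.2--5.6 (the EXP-PT/SPT, EXP-QPT, EXP-$(s,t)$-WT/WT, and EXP-UWT equivalences, together with the exponent statements), completing the proof of the main exponential-tractability theorem.

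\begin{proof}
By the definitions in Subsection 2.2, the problem ${\rm APP}$ is EXP-UWT for a class $\Lz\in\{\Lz^{\rm all},\Lz^{\rm std}\}$ (for the absolute error criterion) if and only if ${\rm APP}$ is EXP-$(s,t)$-WT for $\Lz$ for all $s,t>0$. By Theorem 5.5, for each fixed $s,t>0$, ${\rm EXP}$-$(s,t)$-${\rm WT}$ for $\Lambda^{\rm all}$ is equivalent to ${\rm EXP}$-$(s,t)$-${\rm WT}$ for $\Lambda^{\rm std}$. Hence ${\rm APP}$ is EXP-$(s,t)$-WT for $\Lz^{\rm all}$ for all $s,t>0$ if and only if it is EXP-$(s,t)$-WT for $\Lz^{\rm std}$ for all $s,t>0$, which is precisely the equivalence of ${\rm EXP}$-${\rm UWT}$ for $\Lambda^{\rm all}$ and $\Lambda^{\rm std}$. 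Theorem 5.6 is proved.
\end{proof}

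\noindent{\it Proof of Theorem 2.4.}\

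Theorem 2.4 follows from Theorems 5.2--5.6 immediately. $\hfill\Box$
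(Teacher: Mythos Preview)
Your proof is correct and follows exactly the same approach as the paper: observe that EXP-UWT is by definition the conjunction of EXP-$(s,t)$-WT over all $s,t>0$, and invoke Theorem 5.5 for each fixed pair. One trivial bookkeeping point: in the closing line, the paper writes ``Theorems 5.2 and 5.4--5.6'' since 5.3 is a Remark, not a Theorem.
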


\begin{proof}
By definition we know  that ${\rm APP}$ is EXP-UWT if and only if
 ${\rm APP}$ is EXP-$(s,t)$-WT for all $s,t>0$. Then Theorem 5.6 follows from Theorem 5.5 immediately.
\end{proof}

\noindent{\it Proof of Theorem 2.4.}  \

Theorem 2.4 follows from Theorems 5.2 and 5.4-5.6 immediately.
$\hfill\Box$

\

 \noindent{\bf Acknowledgment}  This work was supported by the
National Natural Science Foundation of China (Project no.
11671271).

\end{document}